\definecolor{mcol}{RGB}{25,25,255}
\definecolor{tcol}{RGB}{25,200,25}
\definecolor{ccol}{RGB}{228,26,28}
\definecolor{stdred}{RGB}{228,26,28}
\definecolor{stdgreen}{RGB}{26,150,65}
\definecolor{stdblue}{RGB}{31,120,180}
\newtheorem{theorem}{Theorem}
\newtheorem{conjecture}[theorem]{Conjecture}
\newdefinition{definition}[theorem]{Definition}
\newtheorem{lemma}[theorem]{Lemma}
\newtheorem{observation}[theorem]{Observation}
\newtheorem{corollary}[theorem]{Corollary}
\newproof{proof}{Proof}
\newcommand {\calC}{\ensuremath{\mathcal C}}
\newcommand {\calD}{\ensuremath{\mathcal D}}
\newcommand {\calK}{\ensuremath{\mathcal K}}
\newcommand{\NX}[2][]{\ifthenelse{\isempty{#1}}{\ensuremath{N(#2)}}{\ensuremath{N_{#1}(#2)}}}
\newcommand{\EXY}[2][]{\ifthenelse{\isempty{#1}}{\ensuremath{E(#2)}}{\ensuremath{E_{#1}(#2)}}}
\newcommand{\overcirc}[1]{\ensuremath{\mathring{#1}}}
\newcommand{\V}[1]{V(#1)}
\newcommand{\E}[1]{E(#1)}
\newenvironment{enumrom}{
	\begin{enumerate}[labelindent=\parindent, leftmargin=*, label=(\roman*), align=left, noitemsep]
	}{
	\end{enumerate}
}
\newenvironment{enumnum}{
	\begin{enumerate}[labelindent=\parindent, leftmargin=*, label=(\arabic*), align=left, noitemsep]
	}{
	\end{enumerate}
}
\tikzset{every picture/.style={very thick}}
\newcommand{\cycleSpacing}{120}
\newcommand{\placeCycleCoordinates}[5]%
{%
	\renewcommand{\cycleSpacing}{360/#3}%
	\foreach \x in {1,...,#3}%
	{%
		\node[#5] (#4\x) at ($(\x*\cycleSpacing:#2)+(#1)$) {};%
	}%
}
\newcommand{\placeAndShowCycleCoordinates}[5]%
{%
	\renewcommand{\cycleSpacing}{360/#3}%
	\foreach \x in {1,...,#3}%
	{%
		\node[#5] (#4\x) at ($(\x*\cycleSpacing:#2)+(#1)$) {$\x$};%
	}%
}
\newcommand{\drawCycle}[5]%
{%
	\renewcommand{\cycleSpacing}{360/#3}%
	\foreach \x / \y / \z in #5%
	{%
		\draw[\z,#4] ($(\x*\cycleSpacing:#2)+(#1)$) arc (\x*\cycleSpacing:\y*\cycleSpacing:#2);%
	}%
}
\begin{document}

\begin{frontmatter}
	\title{Towards obtaining a 3-Decomposition from a perfect Matching}
	
	\author[1]{Oliver Bachtler\corref{cor1}}
	\cortext[cor1]{Corresponding author}
	\ead{bachtler@mathematik.uni-kl.de}
	
	\author[1]{Sven O. Krumke}
	\ead{krumke@mathematik.uni-kl.de}
	
	\address[1]{Department of Mathematics, Technische Universit\"at Kaiserslautern, Paul-Ehrlich-Str. 14, 67663~Kaiserslautern, Germany}
	\begin{abstract}	
		A decomposition of a graph is a set of subgraphs whose edges partition those of~$G$.
		The 3-decomposition conjecture posed by Hoffmann-Ostenhof in 2011 states that every connected cubic graph can be decomposed into a spanning tree, a 2-regular subgraph, and a matching.
		It has been settled for special classes of graphs, one of the first results being for Hamiltonian graphs.
		In the past two years several new results have been obtained, adding the classes of plane, claw-free, and 3-connected tree-width~3 graphs to the list.
		
		In this paper, we regard a natural extension of Hamiltonian graphs:
		removing a Hamiltonian cycle from a cubic graph leaves a perfect matching.
		Conversely, removing a perfect matching~$M$ from a cubic graph~$G$ leaves a disjoint union of cycles.
		Contracting these cycles yields a new graph~$G_M$.
		The graph~$G$ is star-like if $G_M$ is a star for some perfect matching~$M$, making Hamiltonian graphs star-like.
		We extend the technique used to prove that Hamiltonian graphs satisfy the 3-decomposition conjecture to show that 3-connected star-like graphs satisfy it as well.	
		\begin{keyword}
			Graph Decomposition \sep Cubic Graphs \sep Perfect Matching \sep 3-De\-com\-po\-si\-tion Conjecture
		\end{keyword}
	\end{abstract}
\end{frontmatter}

\section{Introduction}
\label{sec:intro}
A decomposition of a graph~$G$ is a set of subgraphs such that any edge of $G$ is contained in exactly one of them.
The 3-decomposition conjecture was posed by Hoffmann-Ostenhof in \cite{Ost11} and also appears in BCC22 \cite{Cam11} as Problem~516:
\begin{conjecture}
	Every connected cubic graph has a decomposition consisting of a spanning tree, a 2-regular subgraph, and a matching, which is called a 3-decomposition.
\end{conjecture}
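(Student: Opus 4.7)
The plan is a two-level inductive strategy. At the outer level, induct on $|V(G)|$ and reduce to the 3-edge-connected case: if $G$ has a bridge $uv$, split $G$ along the bridge, complete each side to a cubic (multi)graph by a small gadget at $u$ and $v$, apply induction, and re-glue; 2-edge-cuts are treated analogously. The technical content of this step is a gluing lemma: 3-decompositions of the two smaller graphs can always be modified near the gadget so that the spanning tree, the 2-regular subgraph, and the matching all behave correctly across the cut edges (in particular, the tree remains acyclic and spanning after identification, and the matching remains a matching).

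In the remaining 3-edge-connected case, I would follow the matching-and-contraction framework set up in this paper. By Petersen's theorem pick a perfect matching $M$; then $G - M$ is a disjoint union of cycles $C_1, \dots, C_k$, and the contracted graph $G_M$ is a connected cubic multigraph on $k$ vertices. Pick a spanning tree $\tau$ of $G_M$ and lift its $k-1$ edges to $M' \subseteq M$. The candidate 3-decomposition then has spanning tree $T = M' \cup \bigcup_i P_i$, where each $P_i$ is a Hamiltonian path of $C_i$ whose endpoints are chosen compatibly with the $M'$-edges incident to $C_i$; the 2-regular subgraph is built from the one removed arc per $C_i$ together with edges from $M \setminus M'$ that close these arcs into cycles; and the matching is whatever remains. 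An edge count confirms that the sizes balance: $T$ has $n-1$ edges, and the remaining $3n/2 - (n-1) = n/2 + 1$ edges split between the 2-regular and matching parts exactly when the arcs and leftover $M$-edges arrange themselves into a disjoint union of cycles.

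The main obstacle, and the reason the conjecture is still open in full generality, is coordinating the endpoint choices inside the $C_i$ with the branching structure of $\tau$. When $G_M$ is a single vertex (Hamiltonian case) or a star (the case addressed in this paper), the constraints decouple enough to be solved directly; in general, fixing the endpoints of $P_i$ propagates through the incident $M'$-edges and a local repair at $C_j$ tends to break the condition at a neighbouring $C_i$. A full proof would need a genuinely global argument, such as an augmenting-structure lemma that swaps edges between $T$, the 2-regular part and the matching along alternating walks in $G_M$, a parity invariant guaranteeing the existence of a globally consistent endpoint assignment, or an inductive build-up of the decomposition along an ear decomposition of the 3-edge-connected graph $G_M$ in which every newly attached ear can be absorbed without disturbing earlier choices. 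Producing such a global consistency mechanism, rather than only the local ones available for stars, is the step I expect to be the true bottleneck, and it is precisely what separates the full conjecture from the partial results currently in the literature.
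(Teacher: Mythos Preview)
The statement you address is the 3-decomposition conjecture itself, which the paper does \emph{not} prove; it remains open. The paper's contribution is Theorem~\ref{3DC-3-connected-star-like}, covering only 3-connected star-like graphs. There is thus no proof in the paper to compare your proposal against, and your own final paragraph correctly concedes that what you have written is a research programme rather than a proof.

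Even as a programme, the template has gaps earlier than the endpoint-coordination obstacle you flag. The bridge and 2-edge-cut reductions are asserted but not carried out: completing each side to a cubic \emph{simple} graph after cutting a bridge is already problematic (a single degree-$2$ vertex cannot be repaired without parallels or loops), and the gluing lemma you need is nowhere supplied. More seriously, your proposed 2-regular part --- the single removed edge $e_i$ from each $C_i$, closed up by edges of $M\setminus M'$ --- does not in general become 2-regular. The two endpoints of each $e_i$ need a second edge in that part; this forces their $M$-edges to lie in $M\setminus M'$ \emph{and} to land on endpoints of other $e_j$'s, a rigid combinatorial condition you have no mechanism to enforce. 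The paper's constructions in the star-like case (Section~\ref{sec:dec-in-cycles}) avoid this by placing chord-bounded subcycles, not single cycle-edges, into the 2-regular part, and by allowing the forest inside a tip cycle to have several components; that extra flexibility is essential already for stars and is absent from your scheme.
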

Note that the last two components are allowed to be the empty graph and formally the last component is a subgraph whose edge set is a matching.

The conjecture was proved to be true for connected cubic graphs that are Hamiltonian by Akbari, Jensen, and Siggers \cite{AJS15}.
In 2016, Abdolhosseini et al.\ \cite{AAHM16} showed that traceable is a sufficient requirement already.
Ozeki and Ye~\cite{OY16} proved that 3-connected cubic graphs satisfy the conjecture if they are planar or on the projective plane while Bachstein \cite{Bac15} deals with 3-connected cubic graphs on the Torus and Klein Bottle.
The first of these results was extended to all connected plane graphs by Hoffmann-Ostenhof, Kaiser, and Ozeki \cite{HKO18} in 2018.
In the same year it was also proved to hold for claw-free (sub)cubic graphs by Aboomahigir, Ahanjideh, and Akbari \cite{AAA18}.
More recently, Lyngsie and Merker \cite{LM19} showed that weakening the matching requirement to allow for paths of length~2 suffices to make the conjecture true and Heinrich \cite{Hei19} proved the conjecture for 3-connected cubic graphs of tree-width~3.
Earlier this year, Xie, Zhou, and Zhou \cite{XZZ20} proved its validity when the graph has a two-factor consisting of three cycles.

In this paper we look at graphs that are a natural extension of Hamiltonian cubic graphs in this context.
Notice that a cubic graph $G$ with a Hamiltonian cycle $C$ has a perfect matching, namely the edges of $G-\E{C}$ where $\E{C}$ denotes the edges of $C$.
In general, for a cubic graph $G$ with a perfect matching $M$, $G-M$ is the disjoint union of cycles, leading us to the following definition.
\begin{definition}
	\label{def:star-like}
	Let $G$ be a connected cubic graph with a perfect matching $M$.
	Then $G-M$ consists of disjoint cycles and contracting these in $G$ to single vertices yields a new graph $G_M$, the \emph{contraction graph}, that has a vertex for every cycle in $G-M$ and an edge between two nodes if the corresponding cycles are connected by an edge of $M$.
	If $G$ has a perfect matching $M$ such that $G_M$ is a star (a tree with diameter at most~2), then $G$ is \emph{star-like}.
\end{definition}

We wish the make a few remarks on this definition.
First note that all Hamiltonian cubic graphs are star-like and, by Petersen's theorem \cite{Pet1891}, all bridgeless cubic graphs have a perfect matching.
Since many conjectures in graph theory, a prominent example being the cycle double cover conjecture \cite{Jae85}, consider or can be reduced to bridgeless cubic graphs, obtaining structural information about these is of interest.
Also, using this definition, the main theorem in \cite{XZZ20} now reads that the conjecture is satisfied for any connected cubic graph with a perfect matching such that its contraction graph has order~3.
This extends the previous proofs for Hamiltonian and traceable graphs, which handle contraction graphs of orders~1 and~2.

Our goal is to prove that:
\begin{theorem}
	\label{3DC-3-connected-star-like}
	Every 3-connected star-like graph has a 3-decomposition.
\end{theorem}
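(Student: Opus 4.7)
The plan is to prove the theorem by an explicit construction that extends the technique of Akbari, Jensen, and Siggers \cite{AJS15} from the Hamiltonian case. Let $G$ be a 3-connected star-like cubic graph with perfect matching $M$ making $G_M$ a star, with center cycle $C_0$ and leaf cycles $C_1, \ldots, C_k$ of $G - M$. The matching edges split into three kinds: \emph{chords of $C_0$} (both endpoints in $C_0$), \emph{external edges} (between $C_0$ and some $C_i$), and \emph{internal chords of $C_i$} (both endpoints in a single leaf cycle). A key observation I would use repeatedly is that 3-connectivity, which for cubic graphs is equivalent to 3-edge-connectivity, forces the edge cut between each leaf cycle $C_i$ and the rest of $G$ to have size at least $3$; since this cut consists exactly of the external matching edges incident to $C_i$, every leaf cycle has at least three external matching edges, and correspondingly $c_i \geq 2 m_i + 3$, where $c_i := |V(C_i)|$ and $m_i$ counts the internal chords of $C_i$.

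The natural strategy is to place every leaf cycle into the 2-regular subgraph: set $F := \bigcup_{i \geq 1} E(C_i)$ and decompose the remainder $H := E(C_0) \cup M$ into a spanning tree $T$ and a matching $M'$. Under this ansatz, every leaf vertex has $\deg_F = 2$, so its single incident matching edge is forced into $T$ (otherwise the vertex is not spanned); in particular all external matching edges end up in $T$. At every vertex of $C_0$ matched externally, the matching edge already sits in $T$ and at least one of the two $C_0$-edges must also join $T$, since the matching constraint $\deg_{M'} \leq 1$ forbids both $C_0$-edges from going into $M'$. The remaining freedom in placing the $C_0$-edges together with the chords of $C_0$ I would exercise by walking around $C_0$ and performing the chord-rotation argument from \cite{AJS15}, using the abundance of external matching edges guaranteed by 3-connectivity to avoid creating cycles in $T$ or matching violations in $M'$.

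The principal obstacle is an internal chord $uv$ with $u, v \in V(C_i)$: under $C_i \subset F$ both endpoints already carry $F$-degree $2$, so placing $uv$ in $T$ isolates $\{u, v\}$ from the rest of $T$, while placing it in $M'$ leaves $u, v$ unspanned. The remedy must be to \emph{cut} $C_i$ at a carefully chosen edge and shift a small portion of $C_i$ from $F$ into $T$, so that $u$ and $v$ reach the spanning tree through $C_i$-edges rather than through the matching. I expect the technical heart of the proof to be showing that these local modifications can be made consistently for every leaf cycle carrying internal chords, and that the resulting $T$ remains acyclic after all modifications. The bound $c_i \geq 2 m_i + 3$ supplied by 3-connectivity should provide exactly the slack needed — enough external matching edges to anchor the modified portion of $C_i$ into the rest of $T$ — while the 3-vertex-connectivity analogously restricts where $u$ and $v$ can lie inside $C_i$ and keeps the case analysis tractable.
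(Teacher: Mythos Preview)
Your outline correctly isolates the central obstacle --- internal chords of a leaf cycle --- but the proposed remedy does not work as stated. You suggest cutting $C_i$ at a carefully chosen edge and shifting ``a small portion'' from $F$ to $T$. However, the $2$-regular component of a 3-decomposition must be a disjoint union of cycles; removing any nonempty proper subset of edges from the single cycle $C_i$ leaves only paths in $F$, which is forbidden. So after any cut either \emph{all} of $C_i$ leaves $F$, or you must simultaneously re-close a new cycle using a chord --- your sketch does neither, and the inequality $c_i \ge 2m_i + 3$ by itself says nothing about how to do this when several internal chords are present and possibly interleaved. The claim that 3-connectivity ``restricts where $u$ and $v$ can lie inside $C_i$'' is also unsubstantiated: 3-connectivity of $G$ bounds only the number of external edges at $C_i$, not the positions of internal chord endpoints. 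Finally, once any edges of $C_i$ are moved into $T$, your earlier deduction that \emph{all} external matching edges are forced into $T$ collapses, so the chord-rotation on $C_0$ would have to be redone under different constraints.

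The paper proceeds quite differently. Rather than committing the leaf cycles to the $2$-regular part and then repairing, it first builds a carefully chosen decomposition of the centre cycle $C_1$ alone (a \emph{good} $\{1\}$-decomposition, Lemma~\ref{exist-good-1-dec}) and then extends it outward one leaf cycle at a time via Lemma~\ref{expanding-I-decompositions}. For each $C_i$ the centre decomposition prescribes a boundary profile $(A_0,A_p,A_m,A_2)$, and Corollary~\ref{eeeA-decompositions} together with Lemmas~\ref{eexA-decompositions}, \ref{xeeA-decompositions}, \ref{exyeA-decompositions} construct a compatible decomposition of $G[V(C_i)]$, typically by locating a small chordless cycle inside $G[V(C_i)]$ to serve as the $2$-regular piece while the remainder of $C_i$ enters the tree. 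Thus when internal chords are present it is the tree, not the $2$-regular subgraph, that absorbs most of $C_i$ --- the opposite of your ansatz. The genuine work lies in Lemmas~\ref{xeeA-decompositions} and~\ref{exyeA-decompositions}, which require a detailed analysis of chord structure (short versus long chords in the first, a maximal chord sequence in the second) and, for Lemma~\ref{exyeA-decompositions}, use 3-connectivity in an essential way. Your outline has no analogue of these arguments.
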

The idea of the proof is to construct a tree on the vertices of the centre cycle and to iteratively extend it to the tips of the star.
Once extended to all cycles it yields a 3-decomposition.
To make this precise, we introduce two types of decompositions in the next section.
One describes this tree and the other formalises the properties that we need to extend it to further cycles.
We also show how and prove that the extension works.
In Section~\ref{sec:dec-in-cycles} we present types of decompositions we can find in cycles.
This has striking similarities to the techniques used in \cite{XZZ20}, which we describe in more detail when they occur.
Using these we construct a 3-decomposition of a 3-connected star-like graph in Section~\ref{sec:proof}.
Finally, we note that we can construct graphs of this type that are not in any of the classes for which the theorem has already been proved. 
This construction can be found in Section~\ref{sec:example}.

\section{Decompositions and their Extension}
\label{sec:dec-ext}
The basic notation for this paper is mainly based on \cite{Die10}, but we briefly summarise what we need here.
All graphs are finite and contain neither self-loops nor parallel edges.
The vertex and edge set of a graph $G$ are denoted by $\V{G}$ and $\E{G}$.
For sets $X,Y\subseteq \V{G}$ we write $\NX{X}$ for the set of neighbours of $X$ and $\EXY{X,Y}$ for the edges of $G$ with one end in $X$ and the other in $Y$.
If $Y=\V{G}\setminus X$, we shorten this to $\EXY{X}$.
We write $uv$ for an edge with ends $u$ and $v$.
A path $P$ is a sequence of distinct vertices $v_0v_1\ldots v_k$ such that $v_{i-1}v_i\in \E{G}$ for $i\in\{1,\ldots,k\}$.
By $v_iPv_j$ with $i\leq j$ we denote the subpath $v_i\ldots v_j$.
The notation $\overcirc{v_i}Pv_j$, for $i<j$, describes the subpath $v_{i+1}\ldots v_j$, $v_iP\overcirc{v_j}$ and $\overcirc{v_i}P\overcirc{v_j}$ are defined analogously, and $\overcirc{P}=\overcirc{v_0}P\overcirc{v_k}$.

From now on, let $G$ be a star-like cubic graph with perfect matching $M$ and cycles $C_1,\ldots,C_l$, where $C_1$ is the centre cycle.
We write $\partial(H)$ for the set of degree~2 vertices in $G[\V{H}]$ for a subgraph $H\subseteq G$ and call these vertices the \emph{boundary of $H$}.
For $\emptyset\neq I\subseteq \{1,\ldots, l\}$, we denote $\bigcup_{i\in I} VC_i$ by $V_I$ and $G[V_I]$ by $G_I$, writing $G_i$ for $G_{\{i\}}$.
Recall that a decomposition of a graph is a set of subgraphs such that any edge is contained in exactly one of them.

As promised, we begin with the two types of decompositions we need, starting with the one describing the tree we wish to extend.
Intuitively, it describes a tree $T$ in $G_I$, for $1\in I \subseteq \{1,\ldots,l\}$, that could be part of a 3-decomposition in the entire graph.
The definition does this by ensuring a few necessary conditions:
It requires that all vertices of degree~3 in $G_I$ are part of the tree and that edges not in $T$ should either be matching edges or part of cycles or paths, which is what you obtain when restricting a collection of cycles to a subgraph.
These paths need to be extended to cycles in a later step, so they must end at the boundary $\partial(G_I)$.
\begin{definition}
	\label{def:I-decomposition-tree}
	Let $1\in I \subseteq \{1,\ldots,l\}$ and $\calD_I= \{T_I,\calC_I,(V_I,M_I)\}$ be a decomposition of $G_I$ such that
	\begin{itemize}
		\item $T_I$ is a tree spanning all degree~3 vertices of $G_I$,
		\item $\calC_I$ is a spanning subgraph of $G_I$ with maximum degree~2, and
		\item $M_I$ is a matching.
	\end{itemize}
	If all path components (components that are paths) of $\calC_I$ end at vertices in $\partial(G_I)$, then $\calD_I$ is an $I$-decomposition.
\end{definition}

Note that for $I=\{1,\ldots,l\}$ this is just a 3-de\-com\-po\-si\-tion since $\calC_I$ is no longer allowed to contain path components.
We also remark that this does not describe all possible restrictions, we would have to allow forests for that to be true, but for the upcoming proof trees suffice.

An example of an $I$-decomposition is shown in \Cref{fig:I-decomposition}.
There the edges in $T_I$ are coloured in green, those in $\calC_I$ are red, and the ones in $M_I$ are blue.
This is also our colour scheme for figures throughout this paper.
The edges on the boundary are not actually part of the decomposition, but they exist and their colours describe which component they should eventually end up in.
\begin{figure}[htb]
	\centering
	\begin{subfigure}{.49\textwidth}
		\centering
		\begin{tikzpicture}[scale = 0.7]
		\placeCycleCoordinates{-4,1}{1}{12}{B}{}
		\draw[color = mcol] (B6.center) to[bend left=30] (B11.center);
		\draw[color = tcol] (B3.center) to[bend left=20] (B8.center);
		
		\placeCycleCoordinates{0,0}{2}{20}{A}{}
		\draw[color = mcol] (A10.center) to[bend left=30] (A15.center);
		\draw[color = tcol] (A1.center) to[bend left=30] (A5.center);
		\draw[color = mcol] (A2.center) to ($(A2.center) + (0.4,0.1)$);
		\draw[color = ccol] (A4.center) to[bend left=15] ($(A4.center) + (0.3,0.3)$);
		\draw[color = ccol] (A9.center) to[bend right=15] ($(A9.center) - (0.5,0)$);
		\draw[color = tcol] (A7.center) to[bend left=15] ($(A7.center) + (-0.4,0.1)$);
		\draw[color = tcol] (A17.center) to[bend right=15] ($(A17.center) + (0.4,-0.1)$);
		\draw[color = tcol] (B1.center) to[bend left=30] (A8.center);
		\draw[color = mcol] (B10.center) to[bend right=30] (A11.center);
		\draw[color = tcol] (B7.center) .. controls (-5.5,-1) and (-2,-2.5) .. (A13.center);
		
		\drawCycle{-4,1}{1}{12}{}{{0/2/tcol,2/4/ccol,4/12/tcol}}
		\drawCycle{0,0}{2}{20}{}{{0/4/tcol,4/9/ccol,9/17/tcol,17/18/mcol,18/20/tcol}}
		\draw[color=ccol,cap=round] (B2.center) to[bend left=30] (B4.center);
		\draw[color=tcol,cap=round] (A14.center) to[bend left=30] (A18.center);	
		
		\node[circle, draw, fill=black, label={[yshift=-20pt]$m$}, scale = 0.5] at (A17) {};	
		\node[circle, draw, fill=black, label={[xshift=-3pt]$p$}, scale = 0.5] at (A4) {};	
		\node[circle, draw, fill=black, label={[xshift=-3pt]$p$}, scale = 0.5] at (A9) {};	
		\node[circle, draw, fill=black, label={[xshift=3pt]$2$}, scale = 0.5] at (A2) {};
		\node[circle, draw, fill=black, label={[xshift=-1pt]$0$}, scale = 0.5] at (A7) {};	
		\end{tikzpicture}
		\caption{An $I$-decomposition with two cycles.
		\label{fig:I-decomposition}}
	\end{subfigure}
	\hfill
	\begin{subfigure}{.49\textwidth}
		\centering
		\begin{tikzpicture}[scale=0.7]
		\placeCycleCoordinates{0,0}{2}{20}{A}{}
		\draw[color = mcol] (A12.center) to[bend left=30] (A16.center);
		\draw[color = tcol] (A10.center) to[bend left=30] (A14.center);
		\draw[color = mcol] (A1.center) to[bend left=30] (A5.center);
		\draw[color = ccol] (A9.center) to[bend right=15] ($(A9.center) - (0.5,0)$);
		\draw[color = ccol] (A19.center) to[bend left=15] ($(A19.center) + (0.5,0)$);
		\draw[color = tcol] (A4.center) to[bend left=15] ($(A4.center) + (0.3,0.3)$);
		\draw[color = tcol] (A7.center) to[bend left=15] ($(A7.center) + (-0.4,0.1)$);
		\draw[color = tcol] (A17.center) to[bend right=15] ($(A17.center) + (0.4,-0.1)$);
		\draw[color = mcol] (A2.center) to ($(A2.center) + (0.4,0.1)$);
		\drawCycle{0,0}{2}{20}{}{{0/9/tcol,9/11/ccol,11/18/tcol,18/19/ccol,19/20/tcol}}	
		\draw[color = ccol,cap=round] (A11.center) to[bend left=30] (A18.center);
		\node[circle, draw, fill=black, label={[xshift=-3pt]$p$}, scale = 0.5] at (A9) {};	
		\node[circle, draw, fill=black, label={[xshift=3pt,yshift=-20pt]$p$}, scale = 0.5] at (A19) {};	
		\node[circle, draw, fill=black, label={[]$m$}, scale = 0.5] at (A7) {};	
		\node[circle, draw, fill=black, label={[xshift=3pt]$2$}, scale = 0.5] at (A2) {};	
		\node[circle, draw, fill=black, label={[yshift = -20pt]$2$}, scale = 0.5] at (A17) {};	
		\node[circle, draw, fill=black, label={[xshift=-1pt]$0$}, scale = 0.5] at (A4) {};
		\end{tikzpicture}
		\caption{An $(A_0,A_p,A_m,A_2)$-decomposition.
		\label{fig:a0pm2-decomposition}}
	\end{subfigure}
	\caption{Sketches of the two types of decompositions introduced in Definitions~\ref{def:I-decomposition-tree} and~\ref{def:a0pm2-decomposition-tree}.
	Green edges should be part of the tree in the final decomposition, red ones are on cycles, and the blue ones form a matching.
	\label{fig:I-a0pm2-decomposition}}
\end{figure}
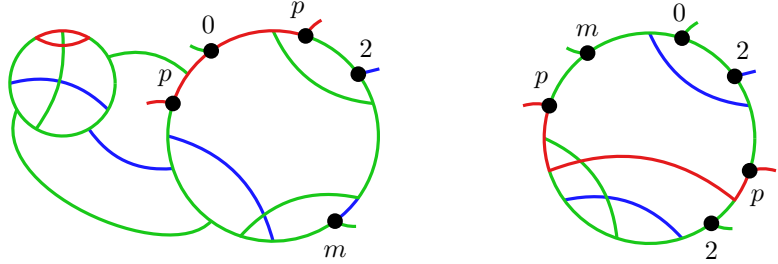

A bit of additional notation is useful at this point.
We write $A_i(\calD_I)$ for the set of vertices in $\partial(G_I)$ that have degree $i$ in $T_I$, where $A_0(\calD_I)$ denotes those that are not in $T_I$ at all.
Moreover, we split the set $A_1(\calD_I)$ into $A_p(\calD_I)$ and $A_m(\calD_I)$, where the former contains those degree~1 vertices of $T_I$ that are ends of path components of $\calC_I$, whereas the latter contains the ends of matching edges.
As the vertices in $A_1(\calD_I)$ have degree~2 in $G_I$, these are the only two possibilities.
As a result, we have that $\partial(G_I)$ is the disjoint union of the four sets $A_x(\calD_I)$ for $x\in\{0,p,m,2\}$.
The drawn vertices in \Cref{fig:I-decomposition} represent the boundary and those in set $A_x(\calD_I)$ are labelled by $x$.

We can now move on to the second type of decomposition we need.
The next definition might seem cryptic at first glance, but it, like the previous one, is essentially just a collection of necessary conditions.
Our goal is to formalise the extension of an $I$-decomposition $\calD_I$ to another cycle $C_i$ by describing a spanning forest $F_i$ that satisfies conditions analogous to those of the tree $T_I$ above.
Once again, the remaining edges should either be part of a matching $M_i$ or on cycles or paths in $\calC_i$.

But it also needs to ``fit together'' with the $I$-decomposition.
We notice that we do not actually need the details of $\calD_I$, but it suffices to know the behaviour of the vertices on the boundary of $G_I$ with an edge to $C_i$.
Let $u$ be such a vertex with unique neighbour $v$ in $C_i$, more precisely in $\partial(C_i)$.
It can exhibit different types of behaviour:

If $u\in A_2(\calD_I)$, then $uv$ can either be in the tree or matching part, depending on whether we need it to connect to $C_i$ or not.
Here, $v$ goes into the set $A_2$.

When $u$ is in $A_p(\calD_I)$, we need to extend the path ending at this vertex to a cycle, meaning it must continue in $C_i$.
Hence, we require a path at $v$ to another vertex in $\partial(C_i)$ of this type.
We put $v$ into the set $A_p$ and require that the vertices in this set are exactly the ends of path components of $\calC_i$.
This is a necessary condition as paths must end at the boundary and all other types of vertices have conflicting behaviour.

If $u$ is in $A_m(\calD_I)$, then $v$ goes into $A_m$.
In this case, the edge $uv$ must be part of the tree and we have to ensure that it creates no cycles.
This is achieved by requiring that any component of $F_i$ contains at most one vertex that is in $A_m$ or in $A_2$ and a leaf of $F_i$.
Such vertices need an edge to $T_I$, which we have already seen for those in $A_m$ and it holds for the leaves as well:
The missing edge at such a vertex must be in the matching as the ends of paths are in $A_p$.

Finally, $u$ can be in $A_0(\calD_I)$, where it needs the edge $uv$ to be part of the tree and $v$ must be connected to $T_I$ in~$C_i$.
Now $v$ is put in the set $A_0$ and we ensure that it ends up in a component of $F_i$ that can be connected to $T_I$.
For this we require every component to contain an element of $A_2\cup A_m$, which are vertices that can or need to connect to $T_I$.
With these ideas at hand, let us give the definition.
\begin{definition}
	\label{def:a0pm2-decomposition-tree}
	Let $A_0$, $A_p$, $A_m$, and $A_2$ be disjoint subsets of $\partial(C_i)$ for some $i\in\{1,\ldots,l\}$ whose union is $\partial(C_i)$.
	Also let $\calD_i = \{F_i,\calC_i,(V_i,M_i)\}$ be a decomposition of $G_i$ such that
	\begin{itemize}
		\item $F_i$ is a spanning forest in $G_i$,
		\item $\calC_i$ is a spanning subgraph of $G_i$ with maximum degree~2, and
		\item $M_i$ is a matching.
	\end{itemize}
	The decomposition $\calD_i$ is an \emph{$(A_0,A_p,A_m,A_2)$-decomposition of $C_i$} if
	\begin{enumrom}
		\item for every component $K$ of $F_i$, the set $\V{K}\cap (A_2\cup A_m)$ is non-empty and contains at most one vertex that is a leaf of $F_i$ or contained in $A_m$ and\label{def:a0pm2-dec-cond-F}
		\item the set of ends of path components of $\calC_i$ is exactly $A_p$.\label{def:a0pm2-dec-cond-Ci}
	\end{enumrom}
\end{definition}

\Cref{fig:a0pm2-decomposition} visualises such a decomposition, where vertices in $A_x$ are labelled by $x$ for $x\in\{0,p,m,2\}$ and the colour scheme is analogous to before: 
the edges in $F_i$ are coloured in green, those in $\calC_i$ are red, and the ones in $M_i$ are blue.

We are now in a position to prove that the necessary conditions we incorporated into our definitions suffice to let us extend an $I$-decomposition.
More precisely, we show that an $I$-decomposition can be extended to a new cycle $C_i$ if we have an $(A_0,A_p,A_m,A_2)$-decomposition there.
Here the sets $A_x$, for $x\in\{0,p,m,2\}$, are assigned exactly those vertices we gave them when we described the intuition behind Definition~\ref{def:a0pm2-decomposition-tree}.
After this, we only need to take a look at what kinds of decompositions we can find in the cycles $C_i$ and how we can piece them together to obtain one of $G$.
\begin{lemma}
	\label{expanding-I-decompositions}
	Let $1\in I \subseteq \{1,\ldots,l\}$, $i\notin I$, $J=I\cup\{i\}$, and $\calD_I = (T_I,\calC_I,M_I)$ be an $I$-de\-com\-po\-si\-tion of $G$.
	If an $(A_0,A_p,A_m,A_2)$-decomposition $\calD_i = (F_i,\calC_i,M_i)$ of $C_i$ exists where
	\begin{displaymath}
	A_x = \NX{A_x(\calD_I)}\cap \V{C_i} \text{ for } x\in\{0,p,m,2\},
	\end{displaymath}
	then $G$ has a $J$-decomposition $(T_J,\calC_J,M_J)$ with $T_I\cup F_i\subseteq T_J$, $\calC_I\cup \calC_i\subseteq \calC_J$, and $M_I\cup M_i\subseteq M_J$.
\end{lemma}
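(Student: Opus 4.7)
The plan is to construct $\calD_J$ by gluing $\calD_I$ and $\calD_i$ along the matching edges between $V_I$ and $V_i$. Write $M_{I,i}$ for this set; since every edge of $G-M$ lies inside some cycle $C_k$, we have the disjoint decomposition $E(G_J) = E(G_I) \sqcup E(G_i) \sqcup M_{I,i}$, so it suffices to partition $M_{I,i}$ into sets $E_T, E_C, E_M$ and set $T_J = T_I \cup F_i \cup E_T$, $\calC_J = \calC_I \cup \calC_i \cup E_C$, and $M_J = M_I \cup M_i \cup E_M$. By hypothesis every $uv \in M_{I,i}$ has $u \in A_x(\calD_I)$ and $v \in A_x$ for a common index $x \in \{0,p,m,2\}$, so the placement reduces to four cases.

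Three cases are forced. Every $A_p$-edge goes in $E_C$, merging the open path at $u$ in $\calC_I$ with the open path at $v$ in $\calC_i$; every $A_m$-edge goes in $E_T$ because $u$ already carries a $T_I$-edge that must be continued into $C_i$; and every $A_0$-edge goes in $E_T$ because $u$ is not yet in the decomposition and must enter $T_J$ through $uv$. For $A_2$-edges I decide component-by-component in $F_i$. Within a component $K$, call $v \in V(K) \cap (A_2 \cup A_m)$ \emph{distinguished} when $v \in A_m$ or when $v$ is a leaf of $F_i$ lying in $A_2$; condition (i) of the given $(A_0,A_p,A_m,A_2)$-decomposition states that $V(K) \cap (A_2 \cup A_m)$ is non-empty and contains at most one distinguished vertex. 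Pick a connecting vertex $w_K$ of $K$ to be the distinguished vertex if one exists, and otherwise any vertex of $V(K) \cap A_2$. The $A_2$-edge incident to $w_K$ goes in $E_T$; all remaining $A_2$-edges with endpoint in $V(K)$ go in $E_M$. Flagging leaves of $F_i$ in $A_2$ as distinguished is necessary because such a leaf $v$ has its two $G_i$-edges split as one $F_i$-edge and one $M_i$-edge (since $v \in A_2$ is not a path-end of $\calC_i$), so placing $uv$ into $E_M$ would give $v$ matching-degree $2$.

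The verification reduces to the four properties of a $J$-decomposition. That the three sets partition $E(G_J)$ is immediate from the case split. That $M_J$ is a matching follows because $A_2(\calD_I)$-vertices have both $G_I$-edges in $T_I$ (so no conflict with $M_I$) and non-leaf $A_2$-vertices in $C_i$ carry no $M_i$-edge (so no conflict with $M_i$). For $\calC_J$ one observes that each $E_C$-edge raises the $\calC$-degrees at its endpoints from $1$ to $2$, and that the surviving path-ends of $\calC_J$ are precisely those path-ends of $\calC_I$ whose matching partner lies in a cycle outside $J$, which therefore remain in $\partial(G_J)$. The main obstacle is showing that $T_J$ is a tree spanning every degree-$3$ vertex of $G_J$: spanning is a routine case-check over the four families of new degree-$3$ vertices, while acyclicity and connectedness come down to the claim that each component $K$ of $F_i$ is joined to $T_I$ by exactly one $E_T$-edge of "connecting" type (from $A_m$ or $A_2$), with any additional $E_T$-edges of type $A_0$ merely attaching a pendant outside $V(T_I) \cup V(F_i)$. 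At least one connecting edge exists by the choice of $w_K$, and at most one by condition (i), since two forced attachments would close a cycle via $F_i$-paths inside $K$ together with the unique $T_I$-paths between their $V_I$-endpoints.
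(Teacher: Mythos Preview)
Your proposal is correct and follows essentially the same approach as the paper: you partition the cross edges $M_{I,i}$ by the type $x\in\{0,p,m,2\}$ of their endpoints, send the $A_p$-edges to the cycle part, the $A_0$- and $A_m$-edges to the tree part, and for each component $K$ of $F_i$ select exactly one $A_2$-edge (preferring a leaf) to go to the tree while the rest go to the matching---this is precisely the paper's construction, only repackaged via your ``distinguished vertex'' terminology. Your verification sketch (matching condition from the degree-$2$ constraint at $A_2$-leaves, path-end bookkeeping for $\calC_J$, and the one-connecting-edge-per-component argument for $T_J$) mirrors the paper's checks.
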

\begin{proof}
	Let $\calD_I$ and $\calD_i$ be decompositions as in the claim.
	In order to get a decomposition $\calD_J$ as desired, we need to assign the edges in $\EXY{\V{G_I},\V{C_i}}$ to the graphs $T_I\cup F_i$, $\calC_I\cup \calC_i$, and the set $M_I\cup M_i$.
	To this end, we proceed as follows.
	Note that, by definition of the sets $A_x$, $\EXY{\V{G_I},\V{C_i}} = \EXY{\partial(G_I),\V{C_i}} = \bigcup_x\EXY{A_x(\calD_I),A_x}$ where $x\in\{0,p,m,2\}$.
	We add the set $\EXY{A_p(\calD_I),A_p}$ to $\calC_I\cup\calC_i$ to get $\calC_J$.
	The sets $\EXY{A_m(\calD_I),A_m}$ and $\EXY{A_0(\calD_I),A_0}$ are both added to $T_I\cup F_i$.
	Additionally, for any component $K$ of $F_i$ that contains a vertex of $A_2$ but none of $A_m$, we pick a vertex from~$A_2\cap \V{K}$ of least degree in $K$ and add the edge incident to it with end in $A_2(\calD_I)$ to the tree part as well.
	(Such vertices exist by Condition~\ref{def:a0pm2-dec-cond-F} and the minimality just means that we choose a leaf in case one is present.)
	This yields~$T_J$.
	The remaining edges of $\EXY{A_2(\calD_I),A_2}$ are added to $M_I\cup M_i$ to get $M_J$.
	
	We claim that $\calD_J= (T_J,\calC_J,M_J)$ is a desired $J$-decomposition of $G$.
	The set $\calC_J$ is the union of two disjoint graphs $\calC_I$ and $\calC_i$ of maximum degree~2 together with edges $\EXY{A_p(\calD_I),A_p}$ connecting degree~1 vertices of these subgraphs.
	Hence it, too, has maximum degree~2 as required.
	Furthermore, a degree~1 vertex in $\calC_J$ must have degree~1 in $\calC_I$ or $\calC_i$.
	In the first case it is an element of $\partial(G_I)$ by definition of an $I$-decomposition and it cannot be part of $\NX{C_i}$ without increasing its degree when we add the edges in \EXY{A_p(\calD_I),A_p}.
	So it is in $\partial(G_J)$ as desired.
	The second case does not occur as vertices of degree~1 in $\calC_i$ are in $A_p$ and have degree 2 in $\calC_J$.
	
	The set $M_J$ is also a matching as it is the union of two matchings $M_I$, $M_i$ in disjoint subgraphs and the additional edges are part of $\EXY{A_2(\calD_I),A_2}$, meaning their ends in $G_I$ have degree~2 in $T_I\subseteq T_J$.
	Their ends in $C_i$ also have degree~2 in $F_i\subseteq T_J$ as a lower degree makes them a leaf or isolated vertex of $F_i$.
	In the first case the component containing that vertex cannot contain a vertex in $A_m$ and the leaf is unique, meaning the edge is added to $T_J$ by our construction.
	The second case faces a component with a unique edge to $G_I$, which is also added to $T_J$.
	
	This just leaves $T_J$.
	Let \calK{} be the set of components of $F_i$ and let $F$ be the union of $T_I$ with the components in \calK.
	By adding the edges of $\EXY{A_m(\calD), A_m}$ to $F$ we have connected all components $K\in\calK$ that contain a vertex of $A_m$ to $T_I$ by exactly one edge each.
	The result is a new forest $F'$ consisting of a tree $T'\supseteq T_I$ and remaining components $\calK'\subseteq\calK$ that have no vertex in $A_m$.
	By adding our chosen elements of $\EXY{A_2(\calD_I),A_2}$ we connect the components of $\calK'$ (as these contain an element of $A_2$) to $T'$ by exactly one edge.
	This results in a tree $T''$.
	Finally, the last missing edges in $\EXY{A_0(\calD_I),A_0}$ connect vertices of $G_I$ that are not in $T''$ to it by a single edge, creating the tree $T_J$.
	
	Now we only need to check that $T_J$ spans all vertices of degree~3 in $G_J$.
	To this end regard a vertex of $G_J$ that is not part of $T_J$.
	It cannot be in $C_i$ as all the components of $F_i$ are part of $T_J$ and $F_i$ was spanning.
	A vertex in $G_I$ that is not part of $T_J$ is also not in $T_I$, putting it in $A_0(\calD_I)$.
	But such vertices still have degree~2 in $G_J$ as they cannot be in~$A_0(\calD_I)\cap\NX{C_i}$ because the degree of such a vertex is~1 now.
	\qed
\end{proof}

\section{Finding Decompositions in Cycles}
\label{sec:dec-in-cycles}
In this section, let $C_i$ be some cycle in $G-M$ and $A_0$, $A_p$, $A_m$, and $A_2$ be disjoint subsets of $\partial(C_i)$ for which we want to find an $(A_0,A_p,A_m,A_2)$-decomposition.
We need four different types of decompositions in order to handle all cases that occur when piecing them together.

Before we start, a bit more notation will come in handy, that we now introduce.
For a chord $e$ of $C_i$ we obtain two paths in $C_i$ between its ends which, together with the chord, yield two cycles, say $C_i'$ and $C_i''$.
We call $C\in \{C_i', C_i''\}$ \emph{minimal} if it is a chordless cycle in $G$.
The unique edge in $M\cap \E{C}$ of a minimal cycle $C$ is denoted by $e_C$, and we write $P_C$ for the path $C-e_C$.
Note that $C_i$ has a minimal cycle avoiding any specific vertex in $\partial(C_i)$ if it has a chord.
To see this, take a chord $e$ with cycles $P_1+e$ and $P_2+e$.
By choosing $v_j,w_j$ as ends of an edge in $M$ of minimal distance in $P_j$, for $j\in\{1,2\}$, we find two minimal cycles $v_jP_jw_jv_j$ that meet disjoint sets of vertices of $\partial(C_i)$.
Note that the vertices $v_j,w_j$ always exist as the ends of $e$ are candidates.
The cycles are minimal as a chord of $v_jP_jw_jv_j$ must be an edge of $M$ whose ends have smaller distance.

A useful construction that we apply regularly is the following.
Let $C$ be a minimal cycle in $C_i$ that does not contain some vertex $x\in A_2\cup A_m$ and where $\V{C}\cap \partial(C_i)$ contains only vertices of $A_2$.
We assign the edges of $\E{G_i}$ to our three components by setting $\calC_i = (\V{C_i},\E{C})$ and $M' = M\cap \E{G_i}\setminus \E{C}$, $F'=C_i-\E{C}$.
In this assignment $\calC_i$ has maximum degree~2 and contains no path components, $M'$ is a matching, and $F'$ consists of a path $P$ together with a set of isolated vertices.
As $x\in P$, this path is not disjoint from $A_2\cup A_m$ and has no leaf in $\partial(C_i)$ as its ends are incident to a chord, so it satisfies~\ref{def:a0pm2-dec-cond-F}.
Let $v$ be an isolated vertex of $F'$.
If $v$ has degree~3 in $G_i$, then it is incident to an edge $vu\notin C$ whose other end is in $P$.
We remove this edge from $M'$ and add it to $F'$, leaving $F'$ acyclic by adding a new leaf to the tree $P$.
As this leaf has degree~3, the larger component continues to satisfy Property~\ref{def:a0pm2-dec-cond-F}.
In the case where $v$ has degree~2, we assumed that $v\in A_2$ and this component also satisfies~\ref{def:a0pm2-dec-cond-F}.
The resulting spanning forest $F_i$ and matching $M_i$ therefore form an $(A_0,A_p,A_m,A_2)$-decomposition of $C_i$ if $A_p=\emptyset$. 
We call this the \emph{decomposition given by $C$}.

We now show that certain $(A_0,A_p,A_m,A_2)$-decompositions exist, starting with $A_0 = A_p = \emptyset$, $A_m=\{x\}$ for some~$x\in \partial(C_i)$, and $A_2=\partial(C_i)\setminus A_m$.
\begin{lemma}
	\label{eexA-decompositions}
	There exists an $(\emptyset,\emptyset,\{x\},A_2)$-decomposition of $C_i$.
\end{lemma}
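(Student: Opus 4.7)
The plan is to split on whether $C_i$ has a chord and, in each case, to read off the decomposition from a construction already presented in this section.

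Suppose first that $C_i$ has a chord. The observation immediately preceding the lemma then supplies a minimal cycle $C \subseteq C_i$ with $x \notin \V{C}$. Since $C$ avoids $x$, we have $\V{C} \cap \partial(C_i) \subseteq \partial(C_i) \setminus \{x\} = A_2$, which is exactly the hypothesis needed for the ``decomposition given by $C$'' construction at the start of this section. I would simply invoke that construction: its output has $\calC_i$ equal to the single cycle $C$ (so no path components), $M_i$ equal to the remaining chords of $C_i$, and $F_i$ a spanning forest whose only non-trivial component is the tree $T$ obtained from the path in $C_i - \E{P_C}$ by grafting on, as extra leaves, each degree-3 internal vertex of $P_C$. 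That construction is already stated to produce an $(A_0, A_p, A_m, A_2)$-decomposition whenever $A_p = \emptyset$, which is our setting, so nothing remains beyond confirming that $x$ is handled correctly.

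If $C_i$ has no chord, then $G_i = C_i$ and $\partial(C_i) = \V{C_i} = A_2 \cup \{x\}$, and I would simply take $\calC_i = C_i$, $M_i = \emptyset$, and $F_i = (\V{C_i},\emptyset)$. Condition~\ref{def:a0pm2-dec-cond-Ci} holds because $\calC_i$ has no path components, while each component of $F_i$ is an isolated vertex of $A_2 \cup A_m$ — not counted as a leaf in the paper's convention, as the proof of Lemma~\ref{expanding-I-decompositions} makes clear by distinguishing ``leaf or isolated vertex'' — so at most one vertex per component (namely $x$, or none) lies in $A_m$, and condition~\ref{def:a0pm2-dec-cond-F} follows.

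The only subtlety is confirming, in the chord case, that the tree $T$ contributes no leaf to $A_2 \cup A_m$: its two endpoints are the chord ends of $e_C$, which have degree at least~3 in $G[\V{C_i}]$ and hence lie outside $\partial(C_i)$, and the grafted leaves are by construction degree-3 vertices of $G_i$ and likewise lie outside $\partial(C_i)$. With this observation the ``at most one leaf-or-$A_m$'' count for $T$ reduces to the single vertex $x \in \V{T} \cap A_m$, and both cases conclude by direct inspection of the definition.
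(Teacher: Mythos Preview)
Your proof is correct and follows essentially the same approach as the paper: the chordless case uses the identical decomposition $(F_i,\calC_i,M_i) = ((\V{C_i},\emptyset), C_i, \emptyset)$, and the chord case invokes the ``decomposition given by $C$'' for a minimal cycle avoiding $x$. Your additional verification that the tree $T$ has no leaves in $\partial(C_i)$ is a helpful unpacking of what the paper's construction already records, and the remark on isolated vertices not being leaves is accurate but ultimately unnecessary, since each singleton component has only one vertex anyway.
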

\begin{proof}
	If the cycle $C_i$ is chordless, all its vertices are in $\partial(C_i)$ and $\E{G_i} = \E{C_i}$.
	Here setting $F_i=(\V{C_i},\emptyset)$, $\calC_i = C_i$, and $M_i=\emptyset$ does the trick.
	
	In the case where the cycle $C_i$ has a chord, it contains a minimal cycle $C$ that avoids $x$ and we can use the decomposition given by $C$. 
	\qed
\end{proof}

We also find decompositions this way when all elements of $\partial(C_i)$ are in $A_2$.
\begin{corollary}
	\label{eeeA-decompositions}
	If $\partial(C_i)\neq\emptyset$, then $C_i$ has an $(\emptyset,\emptyset,\emptyset,\partial(C_i))$-decomposition.
\end{corollary}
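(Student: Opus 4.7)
The plan is to derive this directly from Lemma~\ref{eexA-decompositions}. Since $\partial(C_i)\neq\emptyset$ by hypothesis, I can fix any $x\in\partial(C_i)$ and invoke Lemma~\ref{eexA-decompositions} to obtain an $(\emptyset,\emptyset,\{x\},\partial(C_i)\setminus\{x\})$-decomposition $\calD_i=(F_i,\calC_i,M_i)$ of $C_i$. I then claim that the same triple $(F_i,\calC_i,M_i)$ witnesses the existence of an $(\emptyset,\emptyset,\emptyset,\partial(C_i))$-decomposition, the only change being that $x$ is relabelled from $A_m$ to $A_2$.

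To verify this, I would observe first that $A_2\cup A_m=\partial(C_i)$ both before and after the relabelling, so the non-emptiness of $V(K)\cap(A_2\cup A_m)$ for every component $K$ of $F_i$ is automatically preserved. For the ``at most one vertex that is a leaf of $F_i$ or lies in $A_m$''-clause of Condition~\ref{def:a0pm2-dec-cond-F}, moving $x$ out of $A_m$ can only shrink the collection of vertices that are counted toward this bound, so if it held for $A_m=\{x\}$ it continues to hold for $A_m=\emptyset$. Condition~\ref{def:a0pm2-dec-cond-Ci} depends only on $\calC_i$ and $A_p$, both of which are unchanged, and thus carries over verbatim.

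The underlying structural requirements --- $F_i$ is a spanning forest, $\calC_i$ is a spanning subgraph of maximum degree~2, and $M_i$ is a matching --- are inherited verbatim from the output of Lemma~\ref{eexA-decompositions}, so nothing further needs to be checked. I do not anticipate any real obstacle here: the corollary is essentially a free weakening of the preceding lemma, because reclassifying a boundary vertex from $A_m$ to $A_2$ strictly relaxes the tree-side condition imposed on the component containing it.
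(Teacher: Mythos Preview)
Your proposal is correct and follows exactly the paper's own argument: pick an arbitrary $x\in\partial(C_i)$, apply Lemma~\ref{eexA-decompositions}, and observe that the resulting $(\emptyset,\emptyset,\{x\},\partial(C_i)\setminus\{x\})$-decomposition is already an $(\emptyset,\emptyset,\emptyset,\partial(C_i))$-decomposition. The paper dispatches this last step with the phrase ``by definition,'' whereas you spell out why moving $x$ from $A_m$ to $A_2$ only relaxes Condition~\ref{def:a0pm2-dec-cond-F}; the substance is identical.
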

\begin{proof}
	By Lemma~\ref{eexA-decompositions} there exists an $(\emptyset,\emptyset,\{x\},A_2\setminus\{x\})$-decomposition of $C_i$ for some arbitrary $x\in A_2$.
	This is an~$(\emptyset,\emptyset,\emptyset,A_2)$-decomposition by definition. 
	\qed
\end{proof}

Next, let $A_0=\{x\}$ for some $x\in \partial(C_i)$, $A_2=\partial(C_i)\setminus A_0$, and $A_m = A_p = \emptyset$.
\begin{lemma}
	\label{xeeA-decompositions}
	If $A_2\neq\emptyset$, then there exists an $(\{x\},\emptyset,\emptyset,A_2)$-decomposition of $C_i$.
\end{lemma}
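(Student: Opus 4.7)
The plan is to split into cases based on whether the cycle $C_i$ has a chord. If $C_i$ is chordless then $G_i=C_i$ and $\partial(C_i)=V(C_i)$, so both $C_i$-neighbours of $x$ lie in $A_2$. I would pick such a neighbour $y$ and set $F_i:=C_i-xy$ (a spanning Hamilton path from $x$ to $y$), $\calC_i:=\emptyset$, and $M_i:=\{xy\}$. Then $F_i$ has a single component whose only $A_2$-leaf is $y$ (the other leaf $x$ lies in $A_0$) and whose vertex set meets $A_2$, so \ref{def:a0pm2-dec-cond-F} holds; \ref{def:a0pm2-dec-cond-Ci} is trivial because $\calC_i$ is empty; and $M_i$ is a one-edge matching.

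If $C_i$ has a chord, I would try to apply the ``decomposition given by $C$'' from the start of this section. By the remark preceding this lemma there is a minimal cycle $C$ with $x\notin V(C)$, and then $V(C)\cap\partial(C_i)\subseteq A_2$ automatically. If additionally some $y\in A_2\setminus V(C)$ exists, I would use such a $y$ as the distinguished vertex from $A_2\cup A_m$ required by that construction: the path component $P$ of the resulting forest contains $y$ (because $y\in V(C_i)\setminus V(P_C)\subseteq V(P)$), satisfying \ref{def:a0pm2-dec-cond-F} there, while the isolated components are internal $P_C$-vertices lying in $A_2$, which the construction handles as before.

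The subtle subcase is when $A_2\subseteq V(C)$ for every minimal cycle $C$ avoiding $x$. A structural analysis shows this forces both $C_i$-neighbours $y_1,y_2$ of $x$ to be chord endpoints, with a chord between them allowing a ``jump'' around $x$ into the arc carrying $A_2$. In this subcase I would abandon the standard construction and build a spanning Hamilton path $H$ of $G_i$ from $x$ to some vertex $y\in A_2$ directly via this chord, and set $F_i:=H$, $\calC_i:=\emptyset$, and $M_i:=E(G_i)\setminus E(H)$. The matching property of $M_i$ is automatic: each internal vertex of $H$ has $F_i$-degree $2$ and so $M_i$-degree at most $1$, while the leaves $x$ and $y$ have $G_i$-degree $2$. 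Condition \ref{def:a0pm2-dec-cond-F} holds because $H$ has exactly the two leaves $x\in A_0$ and $y\in A_2$, and \ref{def:a0pm2-dec-cond-Ci} is trivial. The main obstacle will be the structural argument verifying that such a Hamilton path ending in $A_2$ always exists under the restrictive hypothesis $A_2\subseteq V(C)$; the chordless case and the generic chord case are routine verifications of the decomposition conditions.
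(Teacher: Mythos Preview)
Your chordless case and your ``generic chord'' case (a minimal cycle $C$ avoiding $x$ with some $y\in A_2\setminus V(C)$) are correct and coincide exactly with the paper's first two cases.

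The gap is the third case. You correctly isolate the hypothesis ``every minimal cycle avoiding $x$ contains all of $A_2$'', but from there you only sketch a plan and explicitly flag the Hamilton-path existence as ``the main obstacle'' without supplying the argument. The paper's proof of this case is substantial and does not proceed the way you suggest. First it derives the structure: the vertices of $A_2$ form a consecutive arc $P$ on $C_i$, the two $x$--$P$ arcs $P_1,P_2$ have the same length, and every chord joins an inner vertex $x_k$ of $P_1$ to an inner vertex $y_l$ of $P_2$. Then it splits on whether some chord is \emph{long} ($|k-l|\ge 2$). If a long chord exists, the paper does \emph{not} build a Hamilton path; it instead constructs a specific cycle $C'$ through two chords $y_kz_k$ and $y_{k+1}z_{k+1}$, puts $\calC_i=(V(C_i),E(C'))$, and connects the two arcs of $C_i-E(C')$ via the long chord to obtain a spanning tree $F_i$. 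Only in the complementary subcase where all chords are short does the paper produce a Hamilton path from $x$ into $A_2$, and even that requires a nontrivial argument: one shows the short-chord pattern is rigid (chords come as $x_jy_j$ or in pairs $x_jy_{j+1},x_{j+1}y_j$), suppresses $P$ to a single vertex, and uses a parity argument on the resulting even Hamiltonian cycle $P_1\cup P_2$ to extract the Hamilton path.

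Your intermediate claim that there is ``a chord between'' the two $C_i$-neighbours of $x$ is not right in general: each neighbour $x_1,y_1$ is indeed a chord endpoint, but its chord may go to a distant vertex on the opposite arc, so there is no single ``jump around $x$'' available. As written, the hard subcase is a genuine gap rather than a routine verification.
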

\begin{proof}
	We begin by looking at the case where $C_i$ is chordless.
	Let $y$ be a neighbour of $x$ in $C_i$ and regard the spanning tree $F_i=C_i-xy$.
	This contains an element of $A_2$ and it has only one leaf in $A_2$, namely $y$.
	Thus, $F_i$ satisfies Property~\ref{def:a0pm2-dec-cond-F}.
	The last missing edge $xy$ of $\E{G_i}$ is assigned to $M_i$, making this a matching and leaving $\calC_i$ with no edges and thus no path component.
	This gives us an $(\{x\},\emptyset,\emptyset,A_2)$-decomposition.
	
	Now we assume that $C_i$ has a chord.
	The existence of a minimal cycle $C$ that neither contains $x$ nor all elements of $A_2$ is another good case as it satisfies both requirements necessary for us to obtain a decomposition given by $C$.
	
	In the final and most complicated case, we may assume that $C$ has chords but none of them yield a cycle as described above.
	We have already seen that any chord naturally gives rise to two minimal cycles $C,\ C'$ for which~$P_C,\ P_{C'}$ have no inner vertex in common.
	Consequently one of them must contain $x$ while the other contains all vertices of $A_2$.
	Hence, all vertices of $A_2$ must form a path~$P$:
	A vertex of degree~3 between them is incident to a chord and this would yield a minimal cycle as above.
	Let $P_1$ and $P_2$ be the two paths in $C_i-\E{P}$ between $x$ and the ends of~$P$.
	Then any chord $uv$ of $C_i$ must connect an inner vertex of $P_1$ to one of $P_2$:
	If both were on the same path, then one of the two minimal cycles that $uv$ yields would contain neither $x$ nor any element of $A_2$, contradicting our assumption.
	Hence, $P_1$ and $P_2$ have the same length.
	
	Let $x_1,\ldots,x_r$ and $y_1,\ldots,y_r$ be the inner vertices of $P_1$ and $P_2$ respectively, ordered by increasing distance to $x$.
	We call a chord $x_ky_l$ of $C_i$ \emph{short} if $|k-l|\leq 1$ and \emph{long} otherwise.
	As an illustration, the two blue chords in \Cref{fig:xeeA-decompositions-long-chord} are short, whereas the edge $x_ky_l$ is a long chord.
	It turns out that long chords are helpful and the presence of only short ones is structurally very restrictive.
	More precisely, we inductively prove that if all chords $x_ky_l$ in $G_i$ with $k,l\leq d$ are short, then such a chord $x_ky_l$ is in $G$ if and only if $x_ly_k$ is.
	The case that $d\leq 1$ is clear as the only candidate is $x_1y_1$.
	So let it hold up to $d-1$ for $d\geq 2$ and let $x_ky_l$ satisfy $k,l\leq d$.
	If $k,l<d$ or $k,l = d$ we are done, so we may assume, by symmetry, that $k=d$ and $l=d-1$.
	But $x_{d-1}$ is matched to a vertex in $\{y_{d-2},y_{d-1},y_d\}$ by $M$.
	Of these three, only $y_d$ is an option as $y_{d-1}$ is taken by $x_d$ and $x_{d-1}y_{d-2}$ cannot be in $M$ by induction since $x_{d-2}y_{d-1}$ is not.
	
	If a long chord $x_ky_l$ exists, choose one minimising $\min\{k,l\}$.
	By symmetry, we can assume that $k<l$ and all chords with an end of index at most $k-1$ are short.
	As a result, the vertices $y_k,y_{k+1}$ are matched to vertices in $x_{k+1}P_1$.
	This holds as neither is matched to $x_k$ whose neighbour in $G[M]$ is $y_l$ with $l\geq k+2$.
	All vertices in $P_1x_{k-2}$ have neighbours in $P_2y_{k-1}$, so none of these are possible either.
	This just leaves the vertex $x_{k-1}$.
	Since $x_ky_{k-1}\notin M$, $y_{k-1}$ is matched to $x_{k-1}$ or $x_{k-2}$, giving us $x_{k-1}y_{k-1}\in M$ or $x_{k-1}y_{k-2}\in M$, eliminating $x_{k-1}$ as well.
	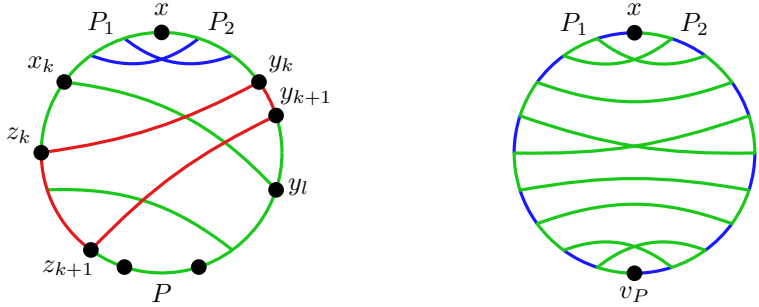
\begin{figure}[htb]
		\centering
		\begin{subfigure}{.49\textwidth}
			\centering
			\begin{tikzpicture}[scale=0.8]
				\placeCycleCoordinates{0,0}{2}{20}{A}{}
				\draw[color = mcol] (A3.center) to[bend left] (A6.center);
				\draw[color = mcol] (A4.center) to[bend left] (A7.center);
				\draw[color = tcol] (A8.center) to[bend left=20] (A19.center);
				\draw[color = tcol] (A17.center) to[bend right=20] (A11.center);
				\drawCycle{0,0}{2}{20}{}{{0/1/tcol,1/2/ccol,2/10/tcol,10/13/ccol,13/20/tcol}}
				\node[circle, draw, fill=black, label={[xshift=0mm, yshift=0mm]$x$}, scale = 0.5] at (A5) {};
				\node[label={[xshift=0mm, yshift=-6mm]$P$}, scale = 0.5] at (A15) {};
				\node[circle, draw, fill=black, scale = 0.5] at (A14) {};
				\node[circle, draw, fill=black, scale = 0.5] at (A16) {};
				\draw[color = ccol,cap=round] (A2.center) to[bend left=10] (A10.center);
				\draw[color = ccol,cap=round] (A1.center) to[bend right=10] (A13.center);
				\node[circle, draw, fill=black, label={[xshift=3mm, yshift=-1mm]$y_k$}, scale = 0.5] at (A2) {};
				\node[circle, draw, fill=black, label={[xshift=4mm, yshift=-1mm]$y_{k+1}$}, scale = 0.5] at (A1) {};
				\node[circle, draw, fill=black, label={[xshift=3mm, yshift=-3mm]$y_{l}$}, scale = 0.5] at (A19) {};
				\node[circle, draw, fill=black, label={[xshift=-3mm, yshift=-1mm]$x_k$}, scale = 0.5] at (A8) {};
				\node[circle, draw, fill=black, label={[xshift=-3mm, yshift=-1mm]$z_k$}, scale = 0.5] at (A10) {};
				\node[circle, draw, fill=black, label={[xshift=-3mm, yshift=-6mm]$z_{k+1}$}, scale = 0.5] at (A13) {};
				\node (P1) at (-1,2.15) {$P_1$};
				\node (P2) at (1,2.15) {$P_2$};
			\end{tikzpicture}
			\caption{The cycle obtained from the long chord $x_ky_l$.
			\label{fig:xeeA-decompositions-long-chord}}
		\end{subfigure}
		\hfill
		\begin{subfigure}{.49\textwidth}
			\centering
			\begin{tikzpicture}[scale=0.8]
				\placeCycleCoordinates{0,0}{2}{20}{A}{}
				\drawCycle{0,0}{2}{20}{}{{0/1/tcol,1/2/mcol,2/3/tcol,3/4/mcol,4/5/tcol,5/6/mcol,6/7/tcol,7/8/mcol,8/9/tcol,9/10/mcol,10/11/tcol,11/12/mcol,12/13/tcol,13/14/mcol,14/15/tcol,15/16/mcol,16/17/tcol,17/18/mcol,18/19/tcol,19/20/mcol}}
				\draw[color = tcol,cap=round] (A3.center) to[bend left] (A6.center);
				\draw[color = tcol,cap=round] (A4.center) to[bend left] (A7.center);
				\draw[color = tcol,cap=round] (A2.center) to[bend left=20] (A8.center);
				\draw[color = tcol,cap=round] (A1.center) to[bend left=10] (A10.center);
				\draw[color = tcol,cap=round] (A20.center) to[bend left=10] (A9.center);
				\draw[color = tcol,cap=round] (A11.center) to[bend left=10] (A19.center);
				\draw[color = tcol,cap=round] (A12.center) to[bend left=20] (A18.center);
				\draw[color = tcol,cap=round] (A13.center) to[bend left] (A16.center);
				\draw[color = tcol,cap=round] (A14.center) to[bend left] (A17.center);
				\node[circle, draw, fill=black, label={[xshift=0mm, yshift=0mm]$x$}, scale = 0.5] at (A5) {};
				\node[circle, draw, fill=black, label={[xshift=0mm, yshift=-6.125mm]$v_P$}, scale = 0.5] at (A15) {};
				\node (P1) at (-1,2.15) {$P_1$};
				\node (P2) at (1,2.15) {$P_2$};
			\end{tikzpicture}
			\caption{The decomposition when all chords are short.
			\label{fig:xeeA-decompositions-short chords}}
		\end{subfigure}
		\caption{Illustrations of the decompositions used in the proof Lemma~\ref{xeeA-decompositions}, using the presence or non-existence of long chords in the cycle $C_i$.}
	\end{figure}
	
	Now, let $z_k,\ z_{k+1}$ be the neighbours of $y_k,\ y_{k+1}$ in $G[M]$ and take a look at the cycle $C = y_ky_{k+1}z_{k+1}P_1z_ky_k$ shown in \Cref{fig:xeeA-decompositions-long-chord}.
	We now apply a construction similar to the one for minimal cycles:
	Assign the edges of $\E{G_i}$ to the three components by setting $\calC_i = (\V{C_i},\E{C})$ and $M' = M\cap \E{G_i}\setminus \E{C}$, $F'=C_i-\E{C}$.
	Then $\calC_i$ has maximum degree~2 and no path components, $M'$ is a matching, and $F'$ consists of two paths together with isolated vertices.
	The ends of these paths are part of $C$, so they have degree~3 in $G_i$ and we can connect the two paths using the long chord $x_ky_l$.
	This replaces the two paths by a tree $T'$ and the isolated vertices have degree~3 in $G_i$ with a neighbour in $P_2$.
	As their neighbours are not on $C$, they are part of $T'$ and we can connect them by adding such edges to $T'$.
	Give $F_i$ the edges of $T'$ and put the remaining edges into $M_i$, then $M_i\subseteq M'$ is still a matching and $F_i$ is a spanning tree.
	Since $F_i$ contains $P$ and thus (all) vertices of $A_2$, the conditions of an $(\{x\},\emptyset,\emptyset,A_2)$-decomposition are satisfied.
	
	This just leaves the case that all chords are short, which makes use of the very potent knowledge of the way these behave.
	Here we give $\calC_i$ no edges of $\E{G_i}$, instead dividing them up amongst $F_i$ and $M_i$.
	Note that the degree~2 vertices of $G_i$ are those in the set $\{x\}\cup A_2$ where the vertices of $A_2$ are all on $P$.
	We suppress all vertices of degree~2 except $x$ and one element of $A_2$.
	Here, suppressing a degree~2 vertex $v$ means removing it and the edges $uv,\ vw$ to its neighbours and adding the direct edge $uw$ between these.
	The resulting graph $G'$ is left with just two vertices of degree~2, namely $x$ and a vertex $v_P$ that has replaced the entire path $P$.
	The paths $P_1$ and $P_2$ are now $xv_P$-paths, so we have $P_1 = xx_1\ldots x_rv_P$ and $P_2 = xy_1\ldots y_rv_P$.
	An illustration of this and the decomposition we now choose can be found in \Cref{fig:xeeA-decompositions-short chords}.
	
	Since $P_1$ and $P_2$ have the same length, $P_1\cup P_2$ is a Hamiltonian cycle of even length.
	Its edges thus decompose into two perfect matchings $M_1$ and $M_2$, letting us obtain a Hamiltonian $xv_P$-path $Q = G' - M_1$ in $G'$.
	To see that this is indeed the case, notice that all vertices of $G' - \{x,v_P\}$ have degree~2 in $Q$, where the two excluded ones have degree~1.
	Hence, $Q$ consists of an $xv_P$-path and possibly additional cycles.
	Suppose it contains a cycle $C$ and choose a vertex of minimal index in $C$.
	By symmetry, we assume this vertex is $x_k\in P_1$.
	Then the edge $x_kx_{k-1}$ is part of $M_1$ as $x_{k-1}\notin C$ (where $x_0,y_0 = x$ and $x_{r+1},y_{r+1} = v_P$).
	Thus, $x_kx_{k+1}$ and $y_ky_{k-1}$ are in $M_2$ and $Q$.
	The edge $x_ky_{k-1}$ cannot be in $G'$ either since $y_{k-1}$ is not part of $C$.
	But now $x_ky_k\in Q$ or $x_ky_{k+1}, x_{k+1}y_k\in Q$.
	Both yield an $x_ky_{k-1}$-path in $C$, $x_ky_ky_{k-1}$ or $x_kx_{k+1}y_ky_{k-1}$ respectively, a contradiction.
	
	Hence, $Q$ is a Hamiltonian path and we can obtain a 3-decomposition by replacing $v_P$ by the path $P$ again and putting all edges of $P\cup Q$ into $F_i$.
	Then $F_i$ is a Hamiltonian path in $G_i$, which ends at $x$ and at an end of $P$.
	This is an $(\{x\},\emptyset,\emptyset,A_2)$-decomposition of $C_i$ since the only component of $F_i$ contains an element of $A_2$ and it only has one leaf in $A_2$.
	\qed
\end{proof}

Here we remark that this lemma was also obtained by Xie, Zhou, and Zhou and can be found in \cite[Lemma~2.3]{XZZ20}.
Their formulation basically describes the two cases in the proof, as they claim to either get a decomposition containing a cycle with two chords or a Hamiltonian path.
We repeated the statement to make it fit into our notation.
The reason we also presented our proof is that it is different and we believe that it reveals more structure.
Our case distinction was based on the existence of long chords and we obtained that either the graph has one or all chords are short, in which case it has a Hamiltonian path.
But we also know that all chords in this case are either of the form $x_iy_i$ or they come in a pair $x_iy_{i+1}, x_{i+1}y_i$.
Xie, Zhou, and Zhou prove this by distinguishing whether or not the cycle has a non-separating two-chord cycle.
This turns out to be exactly our distinction, as we obtain such cycles in the case that there is a long chord and they do not exist when all chords are short, but it obscures the structure of the chords.
They also construct a different Hamiltonian path as a result.

Lastly, we let $A_p=\{x,y\}$ for $x,y\in \partial(C_i)$ and $A_2=\partial(C_i)\setminus A_p$.
Due to the abundance of indices needed in the proof and the lack of cycles therein, we denote the regarded cycle by $C$ instead of $C_i$ and write $G_C$ for $G[\V{C}]$.
\begin{lemma}
	\label{exyeA-decompositions}
	If $G$ is 3-connected, then $C$ has an $(\emptyset,\{x,y\},\emptyset,A_2)$-de\-com\-po\-si\-tion.
\end{lemma}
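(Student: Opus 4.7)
The plan is to construct an $(\emptyset,\{x,y\},\emptyset,A_2)$-decomposition by placing an $xy$-path (and possibly additional cycles) in $\calC_i$, then distributing the remaining edges between a spanning forest $F_i$ and a matching $M_i$. Three-connectedness of $G$ gives $|A_2| \geq |\partial(C)| - 2 \geq 1$, which will be needed throughout. For the chordless case, $G_C = C$ and every vertex of $C$ other than $x,y$ lies in $A_2$. Let $P, P'$ be the two $xy$-arcs of $C$, chosen so that $P'$ has an internal vertex. Setting $\calC_i = P$, $M_i = \emptyset$, and $F_i = P'$ together with the internal vertices of $P$ as isolated components yields the decomposition: each isolated component is a vertex of $A_2$, the component $P'$ contains an $A_2$-vertex, and its only leaves $x,y$ lie in $A_p$, so no leaf of $F_i$ lies in $A_2$.

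For the chord case, let $P_1, P_2$ be the two $xy$-arcs of $C$ and call a chord \emph{crossing} if its endpoints lie on different arcs. The main construction uses a crossing chord $uv$ with $u \in V(P_1)$, $v \in V(P_2)$: form the path $P_{xy}$ by concatenating the $xu$-subpath of $P_1$, the chord $uv$, and the $vy$-subpath of $P_2$. Place $P_{xy}$ in $\calC_i$, the two residual $C$-arcs from $u$ to $y$ and from $v$ to $x$ in $F_i$, and every other chord (pairwise nonadjacent since $M$ is a matching) in $M_i$. The leaves of the two $F_i$-paths are precisely $\{x,u,v,y\}$, none in $A_2$, so Condition~\ref{def:a0pm2-dec-cond-F} reduces to checking that each residual arc contains an $A_2$-vertex, for which the crossing chord must be chosen appropriately.

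The main obstacle is guaranteeing a suitable crossing chord and handling the exceptional configurations. When every chord of $C$ is internal (both endpoints on the same $P_i$), I would employ the minimal-cycle idea of Section~\ref{sec:dec-in-cycles}: an internal chord and the $C$-arc between its endpoints form a cycle that may be placed in $\calC_i$ as a cycle component, converting the interior vertices of one arc into degree-$2$ $\calC_i$-vertices and freeing the other arc to carry the $xy$-path. When no crossing chord separates $A_2$ favorably, the analysis likely mirrors the long/short chord dichotomy of Lemma~\ref{xeeA-decompositions}: a long crossing chord gives the decomposition directly, while a rigid short-chord structure forces a Hamiltonian $xy$-path in $G_C$ whose complement is a perfect matching—here 3-connectedness is needed again to rule out matching components lying entirely outside $A_2$. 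Verifying the conditions of Definition~\ref{def:a0pm2-decomposition-tree} in each subcase will be a routine but delicate check.
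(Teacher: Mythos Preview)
Your chordless case is fine and coincides with what the paper's uniform construction specialises to there. The chord case, however, has a genuine gap.

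In your crossing-chord construction you put $P_{xy}=xP_1u\cup\{uv\}\cup vP_2y$ into $\calC_C$, the two residual arcs $R_1=uP_1y$ and $R_2=xP_2v$ into $F_C$, and \emph{all} remaining chords into $M_C$. But then every inner vertex $w$ of $xP_1u$ or of $vP_2y$ is isolated in $F_C$: its two cycle-edges lie in $\calC_C$ and its chord (if any) lies in $M_C$. If such a $w$ has degree~3 in $G_C$ it is not in $\partial(C)$, hence not in $A_2$, and the singleton component $\{w\}$ violates Condition~\ref{def:a0pm2-dec-cond-F}. A small instance already breaks it: take $P_1=x\,a\,b\,c\,u\,y$, $P_2=x\,v\,y$, with chords $uv$ (the only crossing chord) and $ac$. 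Then $\partial(C)=\{x,y,b\}$, $A_2=\{b\}$, and your $F_C$ has components $\{u,y\}$, $\{x,v\}$, $\{a\}$, $\{b\}$, $\{c\}$, of which only $\{b\}$ meets $A_2$. Moving $ac$ from $M_C$ to $F_C$ merely merges $\{a\}$ and $\{c\}$ without reaching $A_2$, and there is no other crossing chord to try. Your fallback sketches (minimal cycles when all chords are internal, a long/short dichotomy) do not apply to this configuration and are in any case too vague to assess.

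The paper takes the opposite route. One arc $P'$ (chosen so that $\V{P'}\cap A_2\neq\emptyset$) goes \emph{entirely} into the forest, and the $xy$-path $Q$ is built \emph{inside the other arc} $P$: a maximal sequence of chords $u_1v_1,\ldots,u_sv_s$ with both ends on $P$ is selected greedily subject to three conditions (disjointness, an $A_2$-or-connecting-edge property, and maximality), and $Q$ traverses $P$ while shortcutting across these chords. Each skipped segment $P_i=u_iPv_i$ joins the forest, anchored either by its own $A_2$-vertex or by an edge $e_i$ back to $P'$ or an earlier $P_k$. The key step, which invokes 3-connectivity of $G$ for arbitrary 2-cuts $\{w,z\}$ (not just $\{x,y\}$), shows that every inner vertex of each residual piece $\overcirc{Q_i}$ is either in $A_2$ or has a matching edge to the forest already built; that edge is then added to $F_C$. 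This is precisely the mechanism your proposal lacks for absorbing stray degree-3 vertices lying on the $\calC_C$-path.
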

\begin{proof}
	Let $P,P'$ be the two $xy$-paths in $C$.
	Since $G-\{x,y\}$ is connected, $A_2$ is non-empty and we may assume that, without loss of generality, $\V{P'}\cap A_2\neq\emptyset$.
	Next, let $u_1v_1,u_2v_2,\ldots,u_sv_s$ be a maximal sequence of edges of $M$ with ends in $P$ satisfying, for all $i\in\{1,\ldots,s\}$, that
	\begin{enumnum}
		\item the path $P_i=u_iPv_i$ is disjoint from all previous ones, meaning that $P_i\subseteq P-\bigcup_{k<i} P_k$, \label{prop:chord-yield-disjoint-paths}
		\item $P_i$ either contains an element of $A_2$ or there is an edge $e_i\in \EXY{\V{P_i},X_i}$ where $X_i = \V{P'}\cup\bigcup_{k<i}\V{P_k}$, and \label{prop:A2-or-good-neighbour}
		\item $P$ has no vertices $u',v'$ with $u'v'\in M$ and $P_i\subsetneq u'Pv'\subseteq P-\bigcup_{k<i} P_k$. \label{prop:maximal-chords}
	\end{enumnum}

	We remark that these paths $P_i$ end up as part of the tree and Property~\ref{prop:A2-or-good-neighbour} just ensures that they either contain an element of $A_2$, and can form a component, or can connect to a prior path or $P'$, which will have such a vertex by induction or our assumption.
	Also notice that Property~\ref{prop:maximal-chords} can be read as: ``$P_i$ is chosen maximally'', in the sense that it forbids the existence of candidates $u',v'$ for $u_i,v_i$ that would yield a longer path when picked instead.
	
	Let $X_u=\Set{u_i : i=1,\ldots,s}$, $X_v=\Set{v_i : i=1,\ldots,s}$.
	We assume that $u_i$ occurs before $v_i$ in $P$ for all $i$.
	By Property~\ref{prop:chord-yield-disjoint-paths}, no $P_i$ contains an element of $X_u\cup X_v$ as an inner vertex, meaning vertices of $X_u$ and $X_v$ alternate. 
	Now let $y_i$ $(x_i)$ be the $i$th occurrence of a vertex of $X_u$ $(X_v)$ in $P$, for $i\in\{1,\ldots,s\}$, which is just a labelling of the vertices in the order they appear on the path.
	We refer to \Cref{fig:exyeA-decompositions} to keep track of the notation.
	\begin{figure}[htb]
		\centering
		\begin{tikzpicture}[scale=0.85]
			\placeCycleCoordinates{0,0}{2.5}{30}{A}{}
			\draw[color = tcol] (A22.center) to[bend right=30] (A16.center);
			\draw[color = tcol] (A5.center) to[bend right=40] (A1.center);
			\draw[color = ccol] (A24.center) to[bend right=40] (A21.center);
			\draw[color = ccol] (A2.center) to[bend right=40] (A29.center);
			\draw[color = ccol] (A8.center) to[bend right=50] (A4.center);
			
			\draw[color = tcol] (A14.center) to ($(A14.center) + (-0.5,0)$);
			\draw[color = tcol] (A7.center) to ($(A7.center) + (0.1,0.45)$);
			
			\drawCycle{0,0}{2.5}{30}{}{{0/2/tcol,2/4/ccol,4/8/tcol,8/11/ccol,11/18/tcol,18/21/ccol,21/24/tcol,24/25/ccol,25/28/dashed,28/29/ccol,29/30/tcol}}
			
			\node[circle, draw, fill=black, label=above left:{$x$}, scale = 0.5] at (A11) {};
			\node[circle, draw, fill=black, label={$y_1$}, scale = 0.5] at (A8) {};
			\node[circle, draw, fill=black, label={[xshift=5pt, yshift=-2pt]$x_1$}, scale = 0.5] at (A4) {};
			\node[circle, draw, fill=black, label=right:{$y_2$}, scale = 0.5] at (A2) {};
			\node[circle, draw, fill=black, label=right:{$x_2$}, scale = 0.5] at (A29) {};
			\node[circle, draw, fill=black, label={[xshift=7pt, yshift=-20pt]$y_s$}, scale = 0.5] at (A24) {};
			\node[circle, draw, fill=black, label=below:{$x_s$}, scale = 0.5] at (A21) {};
			\node[circle, draw, fill=black, label=below left:{$y$}, scale = 0.5] at (A18) {};
			\node[circle, draw, fill=black, scale = 0.5] at (A14) {}; 
			\node[circle, draw, fill=black, scale = 0.5] at (A7) {}; 
			
			\node (P') at (-3,0) {$P'$};
			\node (Q0) at (-1.1,2.65) {$Q_0$};
			\node (Q1) at (2.4,1.65) {$Q_1$};
			\node (Qk) at (-1.75,-2.25) {$Q_s$};
			\node (P1) at (1.15,2.55) {$P_{i_1}$};
			\node (P2) at (2.9,0.25) {$P_{i_2}$};
			\node (Pk) at (0,-2.8) {$P_{i_s}$};
		\end{tikzpicture}
		\caption{An illustration of the notation and the decomposition used in the proof of Lemma~\ref{exyeA-decompositions}.
		The (visible part) of $Q$ is red, that of $F$ is green, and matching edges are omitted if favour of clarity.
		\label{fig:exyeA-decompositions}}
	\end{figure}
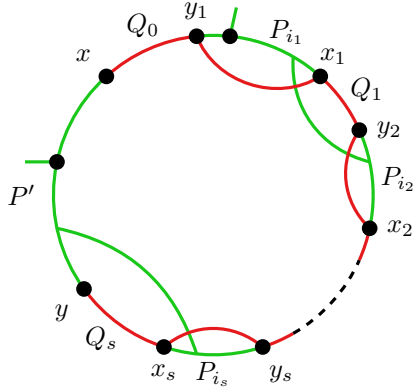
	
	Note that for $Q_i = x_iPy_{i+1}$, where $i\in\{0,\ldots,s\}$ and $x_0=x,y_{s+1}=y$, there are no edges $u'v'$ in $\EXY{\V{\overcirc{Q_i}},\V{\overcirc{Q_j}}}\cap M$, for $i<j$.
	To see this let $m$ be the minimal index in $\{1,\ldots,s\}$ such that $P_m$ is part of $y_{i+1}Px_j$ where $m$ is well-defined as this contains at least the path $y_{i+1}Px_{i+1}$.
	(Be aware that this is not necessarily $P_m$, as we choose the path observed first by the construction, which might appear later in the ordering.)
	Then $u'\in\overcirc{Q_i}, v'\in\overcirc{Q_j}$ are vertices with $P_{m}\subsetneq u'Pv' \subseteq P-\bigcup_{k<m}P_k$, where the last subset relation uses the choice of $m$: 
	Before it, no paths in $x_iPy_{j+1}\supseteq u'Pv'$ were chosen.
	Hence, Property~\ref{prop:maximal-chords} implies that $u'v'\notin M$.
	
	As visualised in \Cref{fig:exyeA-decompositions}, we now set
	\begin{align*}
		Q &= \bigcup_{i=0}^sQ_i + \Set{x_iy_i : i\in\{1,\ldots,s\}} = Q_0y_1x_1Q_1y_2\ldots x_sQ_s,\\
		F'&= P' \cup \bigcup_{i=1}^s P_i +\Set{e_i : \V{P_i}\cap A_2 = \emptyset}, \text{ and}\\
		F &= (\V{C},\E{F'}\cup (E(S,\V{F'})\cap M)) \text{ where } S = \bigcup_{i=0}^s \V{\overcirc{Q_i}}.
	\end{align*}
	Notice that $F'$ is well-defined: 
	The edges $e_i$ exist in the specified case by \ref{prop:A2-or-good-neighbour} and we have just fixed one of them arbitrarily.
	We now show that $F_C=F$, $\calC_C = (\V{C},\E{Q})$, and $M_C = M\setminus(EF\cup EQ)$ is an $(\emptyset,\{x,y\},\emptyset,A_2)$-decomposition of $C$.
	The edges of $G_C$ are partitioned completely as $F\cup Q$ contains all edges of $C$.
	Additionally, $M_C$ is a matching and $\calC_C$ consists of a path from $x$ to $y$ (and isolated vertices).
	We thus need to show that $F$ is a spanning forest in $G_C$ whose components each contain an element of $A_2$, at most one of which is a leaf.
	
	We first show that every vertex of \overcirc{Q_i}, for $i\in\{0,\ldots,s\}$, is either in $A_2$ or adjacent to a vertex of $F'$ in $G[M]$.
	To see this, let $W$ be the set of these vertices and assume $W\neq \V{\overcirc{Q_i}}$.
	Then take two elements $w,z$ of $W\cup\{x_i,y_{i+1}\}$ of minimal distance in $Q_i$ such that $wPz$ contains an element of $\V{\overcirc{Q_i}}\setminus W$.
	As $G-\{w,z\}$ is connected, there exists an edge in $e$ incident to a vertex $u$ of $\overcirc{w}P\overcirc{z}$ with other end in $G-wPz$.
	We show that this edge could be used to extend our sequence, contradicting maximality.
	
	Since $A_2\cup\{x,y\}$ is disjoint from $\overcirc{w}P\overcirc{z}$, the other end $v$ of $e$ is also in $C$.
	By assumption, it is a matching edge that does not end at a vertex in $F'$.		
	But we already know that $e$ cannot end in another $\overcirc{Q_j}$, so it must have both ends in $\overcirc{Q_i}$.		
	Hence it satisfies Conditions~\ref{prop:chord-yield-disjoint-paths} and~\ref{prop:A2-or-good-neighbour}, where the latter holds as either $w$ or $z$ is part of the resulting path and this vertex is in $W$.
	(Note that if $w$ is part of the path, then it cannot be $x_i$ or $y_{i+1}$ as it is an inner vertex of $Q_i$ since both ends of $e$ are and it lies between them.
	The same holds for $z$.)
	Consequently, $e$ either satisfies Property~\ref{prop:maximal-chords} or there exists vertices $u',v'$ as stated there.
	Choosing them to have maximal distance (in $\overcirc{Q_i}$) yields an edge $u'v'$ that satisfies all three conditions.
	In either case we get a contradiction to the maximality of our sequence.
	
	To see that $F$ is a forest, note that $F'$ is the disjoint union of paths with solitary edges connecting those with $\V{P_i}\cap A_2=\emptyset$ to ones prior in the ordering.
	This ensures that $F'$ is acyclic and its leaves are precisely the ends of the paths $P_i$ and $P'$, none of which are in $A_2$.
	The transition to $F$ now only adds edges of $M$ between a vertex of $F'$ and one not in $F'$, which becomes a leaf of the component and is not in $A_2$.
	So $F$ is a spanning forest of $G_C$ without any leaves in $A_2$.
	Now take a component $K$ of $F'$.
	If it contains one of the paths $P_i$ then it has a vertex of $A_2$.
	This just follows from Property~\ref{prop:A2-or-good-neighbour}, for $P_1$ directly and for the others inductively.
	Any component of $F$ that does not contain such a path must be an isolated vertex in $S= \bigcup_{i=0}^s \V{\overcirc{Q_i}}$ without an edge of $M$ to a vertex in $F'$.
	But as vertices of $S$ that do not have such an edge are in $A_2$ by the previous two paragraphs, we get that these components have a vertex of $A_2$, too.
	\qed
\end{proof}

This lemma also requires us to take a look at \cite{XZZ20} again.
Our construction is similar to the one found in Lemma~2.1 there, though our assumption and obtained decomposition differ.
If we formulate their lemma in our notation, we obtain the following.
\begin{lemma}
	Let $x,y\in \partial(C_i)$ and $\partial(C_i)\setminus\{x,y\}\neq\emptyset$.
	The cycle $C_i$ has an $(\emptyset, \{x,y\}, \emptyset, \partial(C_i)\setminus\{x,y\})$-decomposition or there exists an $(A_0,\emptyset, \emptyset, A_2)$-decomposition for any choice of $A_0, A_2$ with $A_0\cup A_2 = \partial(C_i)$, $A_0\cap A_2 = \emptyset$, and $A_2\neq\emptyset$.
\end{lemma}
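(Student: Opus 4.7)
The statement is a disjunction whose first alternative coincides, up to the missing 3-connectivity hypothesis, with Lemma~\ref{exyeA-decompositions}. My plan is to rerun the construction from that proof and convert every use of 3-connectivity into a structural certificate for the second alternative.

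First I would attempt to build the maximal sequence $u_1v_1,\dots,u_sv_s$ of matching chords of $P$ exactly as in the proof of Lemma~\ref{exyeA-decompositions}, together with $Q$, $F'$, and the candidate decomposition. If the resulting $F$ is a spanning forest of the required kind --- equivalently, if every vertex of every $\overcirc{Q_i}$ is in $A_2$ or $M$-adjacent to a vertex of $F'$ --- then the first disjunct holds and we are done.

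Otherwise, the only place the argument of Lemma~\ref{exyeA-decompositions} can break is when some candidate pair $w,z\in\V{\overcirc{Q_i}}$ fails the assertion that $G-\{w,z\}$ is connected. The resulting 2-cut carries strong combinatorial consequences: no chord of $C_i$ crosses the cut, and inside the separated piece the chord pattern is restrictive in the same way that the ``all chords are short'' scenario is restrictive in Lemma~\ref{xeeA-decompositions}. Leveraging this rigidity, I would exhibit, for each partition $A_0\sqcup A_2=\partial(C_i)$ with $A_2\ne\emptyset$, a spanning tree of $G_i$ whose leaves contain at most one element of $A_2$; together with $\calC_i=(\V{C_i},\emptyset)$ and $M_i$ the remaining $M$-edges, this directly yields the required $(A_0,\emptyset,\emptyset,A_2)$-decomposition.

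The main obstacle I expect is precisely that the second alternative must hold for every valid pair $(A_0,A_2)$ simultaneously, which is strictly stronger than what Corollary~\ref{eeeA-decompositions} and Lemma~\ref{xeeA-decompositions} deliver for $|A_0|\le 1$. Translating the 2-cut structure into this uniform flexibility --- essentially the content of Xie, Zhou, and Zhou's dichotomy via non-separating two-chord cycles --- is the technical core of the argument, and it is where one expects the bookkeeping to be most delicate, as the spanning tree must be re-routed so that prescribed $A_2$-vertices avoid being leaves regardless of how $A_0$ is chosen.
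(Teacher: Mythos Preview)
The paper does not prove this statement. It is presented only as a reformulation, in the paper's notation, of Lemma~2.1 of Xie, Zhou, and Zhou~\cite{XZZ20}, inserted for comparison with the paper's own Lemma~\ref{exyeA-decompositions}; the surrounding text explicitly says ``If we formulate their lemma in our notation, we obtain the following'' and then explains why this version cannot replace Lemma~\ref{exyeA-decompositions} in the main argument. There is consequently no in-paper proof to compare your proposal against.

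As for the proposal itself: your overall dichotomy---run the construction of Lemma~\ref{exyeA-decompositions}, and where it stalls for lack of 3-connectivity, harvest a 2-separator that forces the second alternative---is a reasonable outline and is indeed close in spirit to the non-separating two-chord-cycle argument you cite from~\cite{XZZ20}. But what you have written is a plan, not a proof. You correctly flag the crux, namely that the second alternative must supply an $(A_0,\emptyset,\emptyset,A_2)$-decomposition for \emph{every} admissible partition, and you correctly note this is strictly stronger than what Corollary~\ref{eeeA-decompositions} or Lemma~\ref{xeeA-decompositions} give. However, you do not carry that step out: the claims that ``no chord of $C_i$ crosses the cut'' and that the chord pattern becomes ``restrictive in the same way that the all-chords-are-short scenario is'' are heuristic, and you give no concrete mechanism for producing, given an arbitrary $A_0$, a spanning forest whose every component meets $A_2$ with at most one leaf there. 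The analogy with Lemma~\ref{xeeA-decompositions} is suggestive but that lemma handled only $|A_0|\le 1$, and its Hamiltonian-path trick does not obviously scale to arbitrary $A_0$. In short, the proposal names the right case split but leaves the substantive half unresolved.
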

We do, however, need the first decomposition to exist in our proof and cannot use this to eliminate the 3-connectivity requirement from our lemma.

\section{Proof of the main Theorem}
\label{sec:proof}

To shorten the proof of \Cref{3DC-3-connected-star-like}, we define \emph{good} $I$-decompositions and show that they exist.
These are basically just decompositions of the centre where the required behaviour of the tips corresponds to one of our previous lemmas.
\begin{definition}
	\label{def:good-1-dec}
	Let $I=\{1\}$.
	We call an $I$-decomposition $\calD_I$ of $G$ \emph{good} if every $C_j$ with $j>1$ satisfies that
	\vspace{-5pt}
	\begin{itemize}
		\item $|A_0(\calD_I)\cap \NX{C_j}|$ and $|A_m(\calD_I)\cap \NX{C_j}|$ are at most 1, 
		\item $|A_p(\calD_I)\cap \NX{C_j}|\in\{0,2\}$, and
		\item at most one of these sets is non-empty.
	\end{itemize}
\end{definition}

We begin by showing that good decompositions exist.
\begin{lemma}
	\label{exist-good-1-dec}
	In the situation of Definition~\ref{def:good-1-dec}, $G$ has a good $I$-decomposition.
\end{lemma}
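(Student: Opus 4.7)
The plan is to construct a good $I$-decomposition explicitly. For $l = 1$ the good condition is vacuous, while $G = G_1$ is Hamiltonian via $C_1$ and hence has a 3-decomposition by \cite{AJS15}, which is automatically a $\{1\}$-decomposition. So assume $l \geq 2$. Writing $S_j := \NX{C_j}\cap V_1$ for each tip, 3-edge-connectivity of $G$ (which coincides with 3-connectivity in cubic graphs) forces $|S_j|\geq 3$, since the $|S_j|$ matching edges between $C_1$ and $C_j$ form the only cut separating $V(C_j)$ from the rest of $G$.

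The core of the argument relies on two complementary templates built from an edge $e = uv \in \E{C_1}$ with $u, v \in \partial(C_1)$. Let $K$ denote the set of matching chords of $C_1$, that is the edges of $M$ with both endpoints in $V_1$; $K$ is itself a matching in $G_1$. Template $(P)$ sets $T_I := C_1 - e$, $M_I := K$, and $\calC_I := (V_1,\{e\})$, producing $A_p = \{u,v\}$ and $A_2 = \partial(C_1)\setminus\{u,v\}$, which is good exactly when $u,v$ share a tip. Template $(M)$ keeps the same tree but places $e$ into the matching: $M_I := K \cup \{e\}$ (still a matching, since $u,v \in \partial(C_1)$ carry no chord of $M$) and $\calC_I := (V_1,\emptyset)$, giving $A_m = \{u,v\}$, which is good exactly when $u$ and $v$ lie in different tips. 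Since these two alternatives are exhaustive, one of $(P), (M)$ furnishes a good decomposition whenever some cycle edge of $C_1$ has both endpoints in $\partial(C_1)$.

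If no such edge exists then $\partial(C_1)$ is an independent set of $C_1$, so every cycle edge is incident to a chord vertex of $G_1$. In this chord-dominant regime I would promote a chord into the tree: choose $c = ww' \in K$ with $C_1$-neighbours $u, v$ of $w$, and set $T_I := C_1 - \{uw, wv\} + c$, which is a spanning tree of $G_1$ because removing $uw, wv$ isolates $w$ on the cycle and adding $c$ reconnects $w$ to $w'$ on the surviving $u$-to-$v$-path. Provided $u, v \in \partial(C_1)$, placing $\{uw, wv\}$ into $\calC_I$ and $K\setminus\{c\}$ into $M_I$ yields an $I$-decomposition with $A_p = \{u, v\}$, which is good when $u, v$ share a tip. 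The other sub-cases---when $u, v$ lie in different tips, or when one of them is itself a chord vertex---require extending the construction by absorbing additional chords of $K$ into $T_I$ and walking along $C_1$ through consecutive chord vertices until suitable boundary endpoints are reached.

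The hard part will be verifying that in this chord-dominant case a suitable chord vertex (or extended chord-chain) can always be chosen so that the resulting boundary pair has a tip-relationship compatible with one of the available templates. The assumption $|S_j|\geq 3$ for every tip is meant to supply the combinatorial slack needed for this, but carrying out the full case distinction over cyclic arrangements of chord vertices, boundary vertices, and tip labels around $C_1$ is the delicate bookkeeping that occupies most of the formal proof.
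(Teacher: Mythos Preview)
Your templates $(P)$ and $(M)$ for an edge $e=uv$ of $C_1$ with $u,v\in\partial(C_1)$ are correct, and this easy case properly contains the paper's chordless case (there every vertex of $C_1$ lies in $\partial(C_1)$, and the paper does exactly what you do with the two alternatives). The gap is the chord-dominant case, which you explicitly leave as a sketch; in particular, your phrase ``walking along $C_1$ through consecutive chord vertices'' does not cover the situation where chord vertices and boundary vertices alternate around $C_1$, so that no two chord vertices are adjacent and yet no cycle edge has both ends in $\partial(C_1)$. Your outline gives no mechanism for that configuration.

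The paper sidesteps this difficulty entirely by organising the argument differently: its primary dichotomy is simply whether $C_1$ has a chord. When it does, the paper does not try to keep $T_I$ spanning. Instead it selects a \emph{minimal cycle} $C$ in $G_1$ (a chord together with a chordless arc it subtends, already introduced at the start of Section~\ref{sec:dec-in-cycles}) and places $\E{C}$, or a suitable subarc of $P_C$, into $\calC_I$. Boundary vertices in the interior of that arc then simply drop into $A_0$, and a one-line minimal-distance argument shows each $\NX{C_j}$ meets the arc in at most one such vertex (if some $\NX{C_k}$ meets it twice, one shortens to the subarc between a closest such pair, landing those two in $A_p$ instead). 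This disposes of your entire hard case in a few sentences, with none of the ``delicate bookkeeping over cyclic arrangements'' you anticipate.

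Two smaller points. The lemma as stated does not assume $G$ is $3$-connected and the paper's proof never uses it, so your appeal to $|S_j|\ge 3$ is unjustified for the lemma in isolation. And all of your explicit constructions keep $T_I$ spanning, hence $A_0=\emptyset$; it is precisely the willingness to let $A_0$ absorb boundary vertices that makes the paper's chord case short.
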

\begin{proof}
	We may assume that $l>1$ as otherwise $G$ is Hamiltonian and has a 3-decomposition.
	We begin by taking a look at the case where $C_1$ has a chord.
	This lets us basically repeat the construction for decompositions given by cycles.
	We choose some minimal cycle $C$ and regard the amount of elements in $\NX{C_j}$ that are on $C$ for cycles $C_j$ with $j>1$.
	If these sets contain at most one element for all cycles, we set $\calC_I =(\V{C_1},\E{C})$ and $T$ is the path in $C_1-\E{C}$.
	This is a tree spanning all degree~3 vertices of $C_1$ except those on $C$.
	As $C$ is minimal, any such vertex is connected to $T$ by an edge of $M$ and we add this to $T$ to obtain $T_I$.
	The remaining edges, which are a subset of $M$, form $M_I$ and this is an $I$-decomposition $\calD_I$.
	For this decomposition the sets $A_m(\calD_I)$ and $A_p(\calD_I)$ are both empty. 
	Furthermore $A_0(\calD_I)\cap\NX{C_j}$ contains at most one element on $C$ and all vertices of $\partial(C_1)$ that are not in $C$ are in $A_2(\calD_I)$, so the conditions of a good decomposition are satisfied.
	This case is illustrated in \Cref{fig:good-1-dec-no-multiple-edges}.
	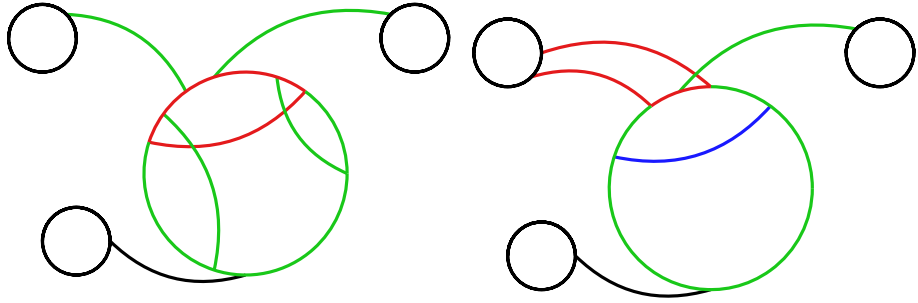
\begin{figure}[htb]
		\centering
		\begin{subfigure}{.49\textwidth}
			\centering
				\begin{tikzpicture}[scale=0.9]
				\placeCycleCoordinates{0,0}{1.5}{20}{A}{}
				\placeCycleCoordinates{-3,2}{0.5}{8}{B}{}
				\placeCycleCoordinates{2.5,2}{0.5}{8}{C}{}
				\placeCycleCoordinates{-2.5,-1}{0.5}{8}{D}{}
				\draw[color = ccol] (A3.center) to[bend left=30] (A9.center);
				\draw[color = tcol] (A4.center) to[bend right=30] (A20.center);
				\draw[color = tcol] (A8.center) to[bend left=30] (A14.center);
				\draw[color = tcol] (A7.center) to[bend right=30] (B1.center);
				\draw[color = tcol] (A6.center) to[bend left=30] (C3.center);
				\draw[color = black] (D8.center) to[bend right=30] (A15.center);
				\drawCycle{0,0}{1.5}{20}{}{{0/3/tcol,3/9/ccol,9/20/tcol}}	
				\drawCycle{-3,2}{0.5}{8}{}{{0/20/black}}
				\drawCycle{2.5,2}{0.5}{8}{}{{0/20/black}}			
				\drawCycle{-2.5,-1}{0.5}{8}{}{{0/20/black}}							
			\end{tikzpicture}
			\caption{$T_I$ in case no cycle has multiple edges to $C$.
			\label{fig:good-1-dec-no-multiple-edges}}
		\end{subfigure}
		\hfill
		\begin{subfigure}{.49\textwidth}
			\centering
			\begin{tikzpicture}[scale=0.9]
				\placeCycleCoordinates{0,0}{1.5}{20}{A}{}
				\placeCycleCoordinates{-3,2}{0.5}{8}{B}{}
				\placeCycleCoordinates{2.5,2}{0.5}{8}{C}{}
				\placeCycleCoordinates{-2.5,-1}{0.5}{8}{D}{}
				\draw[color = mcol] (A3.center) to[bend left=30] (A9.center);
				\draw[color = ccol] (A7.center) to[bend right=30] (B7.center);
				\draw[color = ccol] (A5.center) to[bend right=30] (B8.center);
				\draw[color = tcol] (A6.center) to[bend left=30] (C3.center);
				\draw[color = black] (D8.center) to[bend right=30] (A15.center);
				\drawCycle{0,0}{1.5}{20}{}{{0/5/tcol,5/7/ccol,7/20/tcol}}	
				\drawCycle{-3,2}{0.5}{8}{}{{0/20/black}}
				\drawCycle{2.5,2}{0.5}{8}{}{{0/20/black}}			
				\drawCycle{-2.5,-1}{0.5}{8}{}{{0/20/black}}	
			\end{tikzpicture}
			\caption{And $T_I$ in case the cycle $C_k$ does.
			\label{fig:good-1-dec-multiple-edges}}
		\end{subfigure}
		\caption{The decompositions used in the proof of Lemma~\ref{exist-good-1-dec} when the centre has a chord.}
	\end{figure}
	
	Next we assume that $C$ contains multiple vertices of some $\NX{C_j}$, then we choose two from the same such set $\NX{C_k}$ of minimal distance and denote the path between them in $P_C$ by $P$.
	Then this path contains at most one vertex from sets $\NX{C_j}$ for $j\neq k, j>1$.
	We apply an analogous construction, where we set $\calC_I = (\V{C_I},\E{P})$ and $T$ to the path $C_i-\overcirc{P}$.
	Again we connect vertices of degree~3 in $G_I$ that are not part of $T$ yet and obtain $T_I$ and an $I$-decomposition.
	This, too, is good as $A_m(\calD_I)$ is still empty, $A_p(\calD_I)$ contains exactly the two ends of $P$, which are in \NX{C_k}, and $A_0(\calD_I)\cap\NX{C_j}$ is empty for $j=k$ and has at most one element otherwise.
	\Cref{fig:good-1-dec-multiple-edges} shows the decomposition constructed in this case.
	
	Finally, in the case that $C_1$ is chordless, we take two adjacent vertices $u,v$ on $C_1$.
	If they have neighbours in different cycles, we set $T_I=C_i-uv$ and $M_I=\{uv\}$, leaving $\calC_I$ empty.
	This gives us a spanning tree and a matching that form a good $I$-decomposition $\calD_I$ as all vertices are in $A_2(\calD_I)$ except for $u,v$ which are part of $A_m(\calD_I)$ and in different sets $\NX{C_j}$.
	Should $u,v$ be in the same set $\NX{C_j}$, then we add $uv$ to $\calC_I$ instead and obtain another good $I$-decomposition, this time with $A_p(\calD_I)=\{u,v\}$ and these being part of the same set $\NX{C_j}$.
	\qed
\end{proof}

Now we can finally finish up the proof of \Cref{3DC-3-connected-star-like}.
\begin{proof}
	Let $I=\{1\}$.
	By Lemma~\ref{exist-good-1-dec} there exists a good $I$-decomposition of $G$.
	We now iteratively extend this decomposition to more cycles by checking the conditions of Lemma~\ref{expanding-I-decompositions} and verifying that we can satisfy them with the help of Corollary~\ref{eeeA-decompositions} and Lemmas~\ref{eexA-decompositions}, \ref{xeeA-decompositions}, and \ref{exyeA-decompositions}.
	
	As long as $I\neq \{1,\ldots,l\}$ let us take an element $i\notin I$ and set $J=I\cup\{i\}$.
	Then we can apply Lemma~\ref{expanding-I-decompositions} which gives us a $J$-decomposition if we can exhibit an $(A_0,A_p,A_m,A_2)$-decomposition where $A_x = \NX{A_x(\calD_I)}\cap \V{C_i}$, $x\in\{0,p,m,2\}$.
	As $G$ is star-like, we know that $\partial(G_I)\cap \NX{C_i} = \partial(G_1)\cap \NX{C_i}$.
	Using that $\calD_{\{1\}}$ is good we can conclude that all vertices in $\partial(G_I)\cap \NX{C_i}$ are in $A_2(\calD_I)$, with the possible exception of either one element in $A_0(\calD_I)$, one in $A_m(\calD_I)$, or two in $A_p(\calD_I)$.
	Consequently, we have that all vertices in $\partial(C_i)$ are in $A_2$ aside from a single one in either $A_0$ or $A_m$, or two in $A_p$.
	(Note that this is true initially and remains true in later steps as Lemma~\ref{expanding-I-decompositions} ensures that edges in the centre are never reassigned.)
	The set $\partial(C_i)$ contains at least three elements as it separates $C_i$ from $C_1$ in $G$ and $G$ is 3-connected.
	Hence, $A_2\neq\emptyset$.
	We have thus fulfilled the premise of Corollary~\ref{eeeA-decompositions} and Lemmas~\ref{eexA-decompositions}, \ref{xeeA-decompositions}, and \ref{exyeA-decompositions}, giving us an $(A_0,A_p,A_m,A_2)$-decomposition and completing the proof.
	\qed
\end{proof}

\section{Constructing 3-connected star-like Graphs for which the Conjecture was not already known}
\label{sec:example}
In this section we construct 3-connected star-like graphs, for which \Cref{3DC-3-connected-star-like} shows that they have a 3-de\-com\-po\-si\-tion.
As the conjecture is already proved for graphs that are traceable, planar, claw-free, 3-connected and of tree-width at most 3, embeddable in the Torus or Klein bottle, or have a matching with a contraction graph of order at most~3, our goal is to find examples that have none of these properties.
The construction we present is closely based on \cite{FS07}, which we have modified in order to obtain graphs that are actually star-like.
We recall some notions and results from this paper that we need.
\begin{definition}
	A graph $H$ is hypohamiltonian if it is not Hamiltonian, but $H-v$ is for all $v\in V$.
\end{definition}
\begin{observation}
	\label{hypohamiltonian-ham-paths}
	For a hypohamiltonian graph $H$ and $z\in H$, $H-z$ has no Hamiltonian path between two neighbours of $z$, as this could be extended to a Hamiltonian cycle of $H$.
\end{observation}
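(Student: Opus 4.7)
The statement is essentially self-justifying, and the parenthetical remark in the observation already spells out the core idea. My plan is to give a one-line proof by contradiction.

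First I would suppose, for contradiction, that there exists a vertex $z\in V(H)$ and two neighbours $u,v$ of $z$ in $H$ such that $H-z$ contains a Hamiltonian $uv$-path $P$. Since $P$ spans every vertex of $H-z$ and both $uz,zv\in E(H)$, appending $z$ at the end turns $P$ into the cycle $uPvzu$, which visits every vertex of $H$ exactly once and is therefore a Hamiltonian cycle of $H$.

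This contradicts the first clause of the definition of hypohamiltonian, namely that $H$ itself is not Hamiltonian. Hence no such $uv$-path can exist in $H-z$, proving the observation.

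The only non-routine aspect is making sure the logical direction is right: the definition of hypohamiltonian asserts both that $H$ is non-Hamiltonian and that $H-v$ is Hamiltonian for every $v$; only the former is used here, so there is no obstacle. The argument requires no case distinction and no additional machinery from the paper.
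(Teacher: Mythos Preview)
Your proof is correct and matches the paper's approach exactly: the observation is stated with its justification inline (``as this could be extended to a Hamiltonian cycle of $H$''), and you have simply spelled out that one line in detail. There is nothing to add.
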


For the actual construction, let $H_i$, $i\in I=\{1,2,3\}$, be graphs that are 3-connected, cubic, non-planar, and hypohamiltonian.
Also let $G_4=(\{x\},\emptyset)$ be a further vertex.
In order to see that such $H_i$ exist and to obtain infinitely many examples, we exhibit an infinite family of candidates for the $H_i$.
First, note that we can drop the 3-connectivity requirement.
\begin{observation}
	Any hypohamiltonian graph is 3-connected.
\end{observation}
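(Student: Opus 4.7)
The plan is to rule out cuts of size $0$, $1$, and $2$ directly, using only the definition of hypohamiltonian and the fact that a Hamiltonian cycle in $H-v$ visits every remaining vertex.

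First I would observe that $H$ has at least four vertices (since $H-v$ must be Hamiltonian, and hence contain a cycle, for each $v$) and that $H$ is connected: if $H$ were disconnected, pick any $v$ in a component of size at least $2$, and $H-v$ would still be disconnected, hence not Hamiltonian. Next, for the non-existence of a cut vertex: if $v$ were a cut vertex of $H$, then $H-v$ would be disconnected and therefore cannot contain a Hamiltonian cycle, contradicting hypohamiltonicity.

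The heart of the argument is ruling out a 2-vertex cut. Suppose $\{u,v\}$ separates $H$. By hypohamiltonicity, $H-u$ has a Hamiltonian cycle $C$. Since $C$ spans $V(H)\setminus\{u\}$, in particular $v \in V(C)$. Deleting $v$ from $C$ leaves a Hamiltonian path of $H-\{u,v\}$. But $H-\{u,v\}$ is disconnected by assumption and therefore has no spanning path — contradiction. Hence no 2-cut exists, and $H$ is 3-connected.

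There is no real obstacle here; the only subtlety is to be explicit that the Hamiltonian cycle of $H-u$ visits $v$ (so that removing $v$ genuinely yields a path on the remaining vertices) and to dispose of the trivial small-order edge cases so that ``3-connected'' is meaningful. The whole argument is a few lines and uses nothing beyond the definition together with the obvious fact that a Hamiltonian graph is connected.
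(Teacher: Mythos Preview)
Your argument is correct and is essentially the same as the paper's. The paper streamlines it slightly by skipping the separate treatment of $0$- and $1$-cuts: it simply notes that $|V(G)|\geq 4$ and that for \emph{any} pair $u,v$ the graph $G-\{u,v\}$ is connected (since $G-v$ is Hamiltonian), which is precisely your $2$-cut step and already subsumes the smaller cases.
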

\begin{proof}
	Let $G$ by a hypohamiltonian graph and $v\in \V{G}$.
	Since $G-v$ is Hamiltonian, it has at least three vertices and $G$ has order at least~4.
	Next, let $u$ be another vertex.
	Then, since $G-v$ has a Hamiltonian cycle, $G-\{u,v\}$ is connected.
\end{proof}
So we only need to find a family of cubic graphs that are hypohamiltonian and non-planar.
The Petersen graph and the family of flower snarks satisfies both these properties.
We start with the former.
\begin{lemma}
	The Petersen graph is cubic and hypohamiltonian.
	It also has a $K_5$ as a minor and is thus non-planar.
\end{lemma}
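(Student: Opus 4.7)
The statement bundles three separate assertions, so my plan is to handle each in turn, leaning on the standard representation of the Petersen graph $P$ with an outer $5$-cycle $u_0u_1u_2u_3u_4$, an inner $5$-cycle (pentagram) $v_0v_2v_4v_1v_3$, and five spokes $u_iv_i$.

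That $P$ is cubic is immediate from this description: each $u_i$ is incident to two outer-cycle edges and one spoke, and each $v_i$ is incident to two pentagram edges and one spoke. For the $K_5$-minor claim I would simply contract the five spokes. The five resulting vertices $w_i$ (obtained from $\{u_i,v_i\}$) inherit an outer-cycle edge $w_iw_{i+1}$ and a pentagram edge $w_iw_{i+2}$ (indices mod $5$) for each $i$, which together produce every possible edge among five vertices; hence the contraction is $K_5$. Non-planarity then follows from Kuratowski/Wagner.

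The hypohamiltonian property is the interesting part. For non-Hamiltonicity I would reason as follows: suppose $P$ has a Hamiltonian cycle $C$. Then $P-\E{C}$ is a perfect matching whose five edges are chords of $C$. Label the vertices of $C$ as $x_0,\ldots,x_9$; each chord $x_ix_j$ has a well-defined length $\min(|i-j|,10-|i-j|)\in\{2,3,4,5\}$. Since $P$ has girth $5$, any chord of length $2$ or $3$ would close a $4$- or $5$-cycle containing too few edges outside $C$ to avoid a triangle/quadrilateral when combined with adjacent chords, and a short case analysis (using that the chord ends partition $\{0,\ldots,9\}$ into five consecutive pairs with opposite parities) rules out every compatible configuration, producing a $3$- or $4$-cycle somewhere. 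For Hamiltonicity of $P-v$, I would invoke the well-known vertex-transitivity of $P$ (easy to check directly on the outer/inner/spoke model via the rotation $i\mapsto i+1$ and the swap $u_i\leftrightarrow v_i$, which together act transitively) so it suffices to delete one vertex, say $u_0$, and exhibit an explicit Hamiltonian cycle, e.g.\ $u_1u_2u_3u_4v_4v_1v_3v_0v_2u_2\ldots$—I would tweak indices until the listed sequence genuinely uses only edges of $P-u_0$ and visits all nine remaining vertices exactly once.

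The main obstacle is the non-Hamiltonicity argument: the other pieces are essentially bookkeeping on the standard drawing, but ruling out a Hamiltonian cycle requires either a careful chord-length case analysis as sketched or an alternative counting argument (for instance, comparing the number of $5$-cycles meeting a hypothetical $C$ with the known count of $12$ pentagons in $P$). I would lead with whichever argument is cleaner in the final writeup, and I would keep the vertex-transitivity reduction explicit so that only a single Hamiltonian cycle needs to be exhibited to finish hypohamiltonicity.
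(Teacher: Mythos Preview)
The paper's own proof is a one-line citation (``All these properties can be found in \cite{HS93}''), so your proposal is far more detailed than what the paper actually does. That is fine for a self-contained writeup, and your arguments for cubicity and for the $K_5$-minor are clean and correct.

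There is, however, a concrete error in your hypohamiltonicity outline. The map $u_i\leftrightarrow v_i$ is \emph{not} an automorphism of the Petersen graph: it sends the outer edge $u_iu_{i+1}$ to $v_iv_{i+1}$, which is not an edge of the pentagram. Vertex-transitivity is true, but you need a map such as $u_i\mapsto v_{2i}$, $v_i\mapsto u_{2i}$ (indices mod~5), or the Kneser-graph description via $S_5$, to justify it. Your explicit Hamiltonian cycle in $P-u_0$ also revisits $u_2$; a working one is $u_1u_2u_3u_4v_4v_2v_0v_3v_1u_1$.

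Your non-Hamiltonicity sketch is the weakest point, as you yourself flag. The girth-5 observation does cleanly kill chord lengths $2$ and $3$ (they would close a triangle or a $4$-cycle directly), but the remaining case analysis for lengths $4$ and $5$ is not ``short'' in the way you suggest, and your parenthetical about ``five consecutive pairs with opposite parities'' does not describe the perfect matching of chords. If you want a direct argument, the quickest route is to show that any two chords whose endpoints are adjacent on the cycle force a $4$-cycle when both have length $5$ or both have length $4$, and then check that no arrangement of five chords of lengths in $\{4,5\}$ avoids all such adjacencies. Alternatively, just cite the standard result as the paper does.
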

\begin{proof}
	All these properties can be found in \cite{HS93}.
\end{proof}

\begin{lemma}
	The flower snarks $J_k$ for all odd $k\geq 5$ are cubic, non-planar, and hypohamiltonian.
\end{lemma}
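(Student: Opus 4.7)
The plan is to recall the construction of the flower snark and verify each of the three properties in turn. For odd $k \geq 5$, $J_k$ has $4k$ vertices $\{a_i, b_i, c_i, d_i : 0 \leq i \leq k-1\}$, with $a_i$ adjacent to $b_i$, $c_i$, and $d_i$; the $b$-vertices form a cycle $b_0 b_1 \cdots b_{k-1} b_0$; and the $c$- and $d$-vertices are linked into a single $2k$-cycle $c_0 c_1 \cdots c_{k-1} d_0 d_1 \cdots d_{k-1} c_0$, where the twist corresponding to the edges $c_{k-1} d_0$ and $d_{k-1} c_0$ crucially uses the oddness of $k$. A direct count shows every vertex has degree exactly three, so $J_k$ is cubic.

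Non-planarity then follows from Tait's theorem: a bridgeless cubic planar graph has chromatic index three (equivalent to the Four Colour Theorem). Since Isaacs showed that $J_k$ is a snark, that is, has chromatic index four, for all odd $k \geq 5$, the graph $J_k$ cannot be planar. Alternatively, for concreteness one can exhibit an explicit $K_{3,3}$-minor inside the flower structure.

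For hypohamiltonicity I would proceed in two steps. First, $J_k$ has no Hamiltonian cycle: any such cycle $H$ must, at each $a_i$, use exactly two of the three edges $a_i b_i, a_i c_i, a_i d_i$. Recording for each $i$ which of $\{b, c, d\}$ is missed yields a word over a three-letter alphabet whose structure is constrained by how $H$ is forced to traverse the outer $k$-cycle on the $b_i$ and the twisted $2k$-cycle on the $c_i, d_i$. A parity argument exploiting the oddness of $k$ together with the twist at $c_{k-1} d_0$ and $d_{k-1} c_0$ shows that no such word is consistent, yielding a contradiction. Second, for each vertex $v$ one exhibits a Hamiltonian cycle in $J_k - v$; the automorphism group of $J_k$ acts with orbits essentially $\{a_i\}$, $\{b_i\}$, and $\{c_i, d_i\}$, so it suffices to treat $v \in \{a_0, b_0, c_0\}$, and for each of these vertices an explicit Hamiltonian cycle can be written down using the natural rotational symmetry and verified routinely.

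The main obstacle is the first part of hypohamiltonicity, the non-existence of a Hamiltonian cycle in $J_k$. This is the only step that genuinely uses the oddness of $k$ together with the twist in the construction, and it requires a careful case analysis of the possible patterns of missed edges at the vertices $a_i$. The remaining parts reduce either to explicit rotationally symmetric constructions or to classical facts about the chromatic index of snarks.
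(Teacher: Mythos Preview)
Your proposal is correct in outline, but it takes a genuinely different route from the paper. The paper's proof consists entirely of citations: it notes that cubicity is immediate from the definition, defers hypohamiltonicity to Clark and Entringer, and defers non-planarity to Isaacs. You instead sketch self-contained arguments for each property --- the degree count for cubicity, Tait's theorem combined with Isaacs' chromatic-index result for non-planarity, and a parity/case analysis together with explicit Hamiltonian cycles in the vertex-deleted graphs for hypohamiltonicity. Your approach buys self-containment and some structural insight into why oddness of $k$ matters, at the cost of considerable additional work (the non-Hamiltonicity case analysis in particular is nontrivial to write out in full). The paper's approach is the standard one when the facts are well established in the literature and peripheral to the main contribution, which is exactly the situation here: the flower snarks appear only to furnish an infinite supply of building blocks for the examples in Section~\ref{sec:example}, so there is no reason to reprove their properties.
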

\begin{proof}
	The flower snarks are cubic by definition and hypohamiltonian by~\cite{CE83}.
	A proof that these graphs are non-planar can be found in~\cite{Isa75}.
\end{proof}

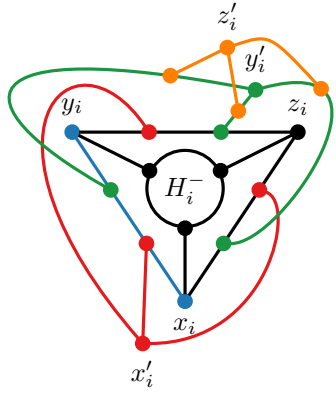
\begin{figure}[hbt]
	\centering
	\begin{tikzpicture}[scale=0.75]
		\node[circle, draw] (h) at (0,0) {$H_i^-$};
		\node[circle,draw=stdblue,fill=stdblue,scale=0.5,label=above:{$y_i$}] (y) at (-2,1) {};
		\node[circle,draw,fill=black,scale=0.5,label=above:{$z_i$}] (z) at (2,1) {};
		\node[circle,draw=stdblue,fill=stdblue,scale=0.5,label=below:{$x_i$}] (x) at (0,-2) {};
		\node[circle,draw=stdgreen,fill=stdgreen,scale=0.5,label=above:{$y'_i$}] (y2) at (1.25,1.75) {};
		\node[circle,draw=orange,fill=orange,scale=0.5,label=above:{$z'_i$}] (z2) at (0.75,2.5) {};
		\node[circle,draw=stdred,fill=stdred,scale=0.5,label=below:{$x'_i$}] (x2) at (-0.75,-2.75) {};
		\draw (x) to node[pos=1,circle,draw,fill=black,scale=0.5] {} (h);
		\draw (y) to node[pos=1,circle,draw,fill=black,scale=0.5] {} (h) ;
		\draw (z) to node[pos=1,circle,draw,fill=black,scale=0.5] {} (h) ;
		\draw[stdblue] (x) to node[pos=0.33,circle,draw=stdred,fill=stdred,scale=0.5] (c1) {} 
		node[pos=0.67,circle,draw=stdgreen,fill=stdgreen,scale=0.5] (d1) {} (y);
		\draw (y) to node[pos=0.33,circle,draw=stdred,fill=stdred,scale=0.5] (c2) {} 
		node[pos=0.67,circle,draw=stdgreen,fill=stdgreen,scale=0.5] (d2) {} (z);
		\draw (z) to node[pos=0.33,circle,draw=stdred,fill=stdred,scale=0.5] (c3) {} 
		node[pos=0.67,circle,draw=stdgreen,fill=stdgreen,scale=0.5] (d3) {} (x);
		\draw[stdred] (x2) to (c1);
		\draw[stdred] (x2) .. controls (-4,1) and (-2,3) .. (c2);
		\draw[stdred] (x2) .. controls (1.5,-2.75) and (2,-0.25) .. (c3);      
		\draw[stdgreen] (y2) to node[pos=0.5,circle,draw=orange,fill=orange,scale=0.5] (a) {} (d2);
		\draw[stdgreen] (y2) .. controls (4,2.5) and (1.75,-1) .. (d3) node[pos=0.2,circle,draw=orange,fill=orange,scale=0.5] (b) {};
		\draw[stdgreen] (y2) .. controls (-4,2.75) and (-4,1.5) .. (d1) node[pos=0.1,circle,draw=orange,fill=orange,scale=0.5] (c) {}; 
		\draw[orange] (a) to (z2);
		\draw[orange] (b) .. controls (1.5,2.75) .. (z2);
		\draw[orange] (c) to (z2);        
	\end{tikzpicture}
	\caption{The extension $G_i$ of the graph $H_i$.
	\label{fig:example-graph-extending-h}}
\end{figure}
Now on to the construction.
For each $i\in I$ we pick some vertex $z_i\in H_i$ and set $H_i^-$ to $H_i-z_i$.
Our next goal is to expand $H_i$ to the slightly larger 3-connected cubic graph $G_i$ shown in \Cref{fig:example-graph-extending-h}.
To ensure 3-connectivity we may iteratively subdivide two distinct edges and connect the resulting degree two vertices \cite{Wor79}.
We apply this first to two of the edges incident to $z_i$ and call the subdivision vertices $x_i$ and $y_i$.
This step is drawn in blue in the figure.
Next note that we can also subdivide three edges and connect the subdivision vertices to a single new vertex since this is just a sequence of two subdivision steps.
We apply this to the edges of the triangle $x_iy_iz_ix_i$ and call the new vertex $x_i'$, which is drawn in red.
Next, we subdivide three of the newly split edges of the triangle that form a matching and connect them to the new vertex $y_i'$, shown in green.
Finally, we subdivide all edges incident to $y_i'$ and connect them to $z_i'$, which are the orange vertices and edges.
Let the resulting graph be called $G_i$, denote $G_i-z_i'$ by $G_i^-$, and set $\NX[G_i]{z_i'} = \{a_i,b_i,c_i\}$.

Before we go on, we show some properties that the graphs $G_i$ and $G_i^-$ possess.
\begin{lemma}
	\label{prop-of-gi-minus}
	The following properties hold:
	\begin{enumrom}
		\item The graph $G_i$ is 3-connected and cubic. \label{prop-of-gi-minus-3-conn}
		\item The graph $G_i^-$ has $H_i$ as a minor. \label{prop-of-gi-minus-non-planar}
		\item The graph $G_i^-$ has no Hamiltonian path with both ends in $\NX[G_i]{z_i'}$. \label{prop-of-gi-minus-Ham-paths}
		\item For any $u\in G_i^-$, there exist three $u\NX[G_i]{z_i'}$-paths that are disjoint aside from $u$. \label{prop-of-gi-minus-exit-paths}
		\item For any pair of distinct vertices $u,v\in \NX[G_i]{z_i'}$, $G_i^--H_i^-$ contains a Hamiltonian $uv$-path $P_i(u,v)$.\label{prop-of-gi-minus-path-around-hi}
		\item The graph $H_i^-$ is Hamiltonian.\label{prop-of-gi-minus-Ham}
	\end{enumrom}
\end{lemma}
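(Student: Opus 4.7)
My plan is to address the six items in order, with item~(iii) being the only substantial one; throughout, let $p, q, r$ denote the three neighbours of $z_i$ in $H_i$. For~(i), every step of the construction is either a \emph{subdivide-and-join} operation (subdividing two edges and joining the new degree-$2$ vertices) or a short sequence of such operations; by Wormald~\cite{Wor79} each preserves 3-connectivity, and each preserves cubicity since subdivision creates a degree-$2$ vertex which the subsequent join raises to degree~$3$, while every pre-existing vertex keeps its degree. For~(ii), I exhibit $H_i$ as a minor of $G_i^-$ by deleting all added vertices except those lying on the two $z_i$-to-$p$ and $z_i$-to-$q$ paths obtained by subdividing the original edges $z_ip$ and $z_iq$, and then contracting each of these two paths to a single edge; together with the undivided edge $z_ir$ and $H_i^-$, this recovers $H_i$. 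Item~(vi) is immediate: hypohamiltonicity of $H_i$ means by definition that $H_i - z_i = H_i^-$ is Hamiltonian.

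The heart of the lemma is~(iii), which I tackle by contradiction. Suppose $P$ is a Hamiltonian path of $G_i^-$ whose two endpoints lie in $\NX[G_i]{z_i'}$. The construction adds no edges between $H_i^-$ and the rest of $G_i^-$ except the three (possibly subdivided) original edges at $z_i$, namely $z_ir$, $x_ip$ and $y_iq$, so every switch of $P$ between $H_i^-$ and its complement must pass through a vertex of $\{p, q, r\}$ and use one of these three boundary edges. Because both endpoints of $P$ lie outside $H_i^-$ while $P$ visits every vertex of $H_i^-$, the number of switches is a positive even integer of size at most~$3$, and therefore equals~$2$. Consequently $P \cap H_i^-$ is a single Hamiltonian path of $H_i^-$ whose two endpoints lie in $\{p, q, r\}$, i.e.\ between two neighbours of $z_i$, contradicting Observation~\ref{hypohamiltonian-ham-paths}.

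For~(iv), by~(i) the graph $G_i$ is 3-connected, so Menger's theorem applied to $u$ and $z_i'$ in $G_i$ yields three internally disjoint $uz_i'$-paths; each enters $z_i'$ through a distinct vertex of $\NX[G_i]{z_i'}$, so dropping the final vertex of each gives the three required paths in $G_i^-$. For~(v), the $14$-vertex graph $G_i^- - H_i^-$ admits a $3$-fold rotational automorphism cyclically permuting $(a_i, b_i, c_i)$ together with the other triples of vertices added symmetrically in Steps~2--4, so it suffices to exhibit a single Hamiltonian path between two vertices of $\{a_i, b_i, c_i\}$, which can be read off by direct inspection of the explicit structure. The only genuine obstacle is~(iii); the remaining items follow from standard tools (Menger, Wormald, hypohamiltonicity), a direct minor construction, or a symmetry-plus-inspection argument.
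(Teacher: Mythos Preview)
Your proof is correct and mirrors the paper's own argument in every part: item~(iii) via the three-edge cut between $H_i^-$ and the rest of $G_i^-$ forcing an impossible Hamiltonian path in $H_i^-$, item~(iv) via Menger in $G_i$, and items~(i), (ii), (v), (vi) via Wormald, an explicit minor, symmetry-plus-inspection, and hypohamiltonicity respectively. The only cosmetic difference is that for~(v) the paper actually displays the three Hamiltonian paths in a figure, whereas you defer to inspection; given the $14$-vertex graph and its evident $3$-fold symmetry, this is acceptable, though exhibiting one such path explicitly would make the argument self-contained.
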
	
\begin{proof}
	We look at the six parts in turn.
	\begin{enumrom}
		\item This follows by the assumption on $H_i$ and the construction.
		\item The minor $H_i$ is obtained by taking the subgraph consisting only of the black edges in \Cref{fig:example-graph-extending-h} and removing the subdivision vertices, which corresponds to contracting one of their incident edges.
		\item Notice that a Hamiltonian path in $G_i^-$ that does not end in $H_i^-$ must use exactly two of the three edges between $\{x_i,y_i,z_i\}$ and $\NX[H_i]{z_i}$ (as they form a cut).
		Thus it would induce a Hamiltonian path in $H_i^-$ with both ends in $\NX[H_i]{z_i}$, which does not exist by Observation~\ref{hypohamiltonian-ham-paths}.
		\item As $G_i$ is 3-connected, it contains three internally vertex-disjoint $uz_i'$-paths, which yield the desired $u\NX[G_i]{z_i'}$-paths in $G_i^-$.
		\item The three desired paths are shown in \Cref{fig:example-graph-hamiltonian-paths-in-G-H}.
		Note that the graph is symmetric and the second two are all just rotations of the first.
		\item Follows directly from $H_i$ being hypohamiltonian.\qed
	\end{enumrom}
\end{proof}
\begin{figure}[htb]
	\centering
	\scalebox{0.8}[0.8]{
	\begin{tikzpicture}[scale=0.55]
		\node[circle,draw,fill=black,scale=0.5] (y) at (-2,1) {};
		\node[circle,draw,fill=black,scale=0.5] (z) at (2,1) {};
		\node[circle,draw,fill=black,scale=0.5] (x) at (0,-2) {};
		\node[circle,draw,fill=black,scale=0.5] (y2) at (1.25,1.75) {};
		\node[circle,draw,fill=black,scale=0.5] (x2) at (-0.75,-2.75) {};
		\draw (x.center) to node[pos=0.33,circle,draw,fill=black,scale=0.5] (c1) {} 
		node[pos=0.67,circle,draw,fill=black,scale=0.5] (d1) {} (y.center);
		\draw (y.center) to node[pos=0.33,circle,draw,fill=black,scale=0.5] (c2) {} 
		node[pos=0.67,circle,draw,fill=black,scale=0.5] (d2) {} (z.center);
		\draw (z.center) to node[pos=0.33,circle,draw,fill=black,scale=0.5] (c3) {} 
		node[pos=0.67,circle,draw,fill=black,scale=0.5] (d3) {} (x.center);
		\draw (x2.center) to (c1.center);
		\draw (x2.center) .. controls (-4,1) and (-2,3) .. (c2.center);
		\draw (x2.center) .. controls (1.5,-2.75) and (2,-0.25) .. (c3.center);      
		\draw (y2.center) to node[pos=0.5,circle,draw,fill=black,scale=0.5] (a) {} (d2.center);
		\draw[] (y2.center) .. controls (4,2.5) and (1.75,-1) .. (d3.center) node[pos=0.2,circle,draw,fill=black,scale=0.5] (b) {};
		\draw (y2.center) .. controls (-4,2.75) and (-4,1.5) .. (d1.center) node[pos=0.1,circle,draw,fill=black,scale=0.5] (c) {}; 
		
		\draw[draw,line width=4pt,stdred,opacity=0.8,join=round] (a.center) to (d2.center) to (z.center) to (x.center) to (c1.center) to (x2.center) .. controls (-4,1) and (-2,3) .. (c2.center) to (y.center) to (d1.center);
		\draw[draw,line width=4pt,stdred,opacity=0.8,join=round] (y2.center) .. controls (-4,2.75) and (-4,1.5) .. (d1.center);
		\draw[draw,line width=4pt,red,opacity=0.8,join=round,dash pattern=on 17pt off 1000pt] (y2.center) .. controls (4,2.5) and (1.75,-1) .. (d3.center);
		
		\node[circle,draw,fill=black,scale=0.5] (y) at (-2,1) {};
		\node[circle,draw,fill=black,scale=0.5] (z) at (2,1) {};
		\node[circle,draw,fill=black,scale=0.5] (x) at (0,-2) {};
		\node[circle,draw,fill=black,scale=0.5] (y2) at (1.25,1.75) {};
		\node[circle,draw,fill=black,scale=0.5] (x2) at (-0.75,-2.75) {};
		\draw (x.center) to node[pos=0.33,circle,draw,fill=black,scale=0.5] (c1) {} 
		node[pos=0.67,circle,draw,fill=black,scale=0.5] (d1) {} (y.center);
		\draw (y.center) to node[pos=0.33,circle,draw,fill=black,scale=0.5] (c2) {} 
		node[pos=0.67,circle,draw,fill=black,scale=0.5] (d2) {} (z.center);
		\draw (z.center) to node[pos=0.33,circle,draw,fill=black,scale=0.5] (c3) {} 
		node[pos=0.67,circle,draw,fill=black,scale=0.5] (d3) {} (x.center);
		\draw (x2.center) to (c1.center);
		\draw (x2.center) .. controls (-4,1) and (-2,3) .. (c2.center);
		\draw (x2.center) .. controls (1.5,-2.75) and (2,-0.25) .. (c3.center);      
		\draw (y2.center) to node[pos=0.5,circle,draw,fill=black,scale=0.5] (a) {} (d2.center);
		\draw[] (y2.center) .. controls (4,2.5) and (1.75,-1) .. (d3.center) node[pos=0.2,circle,draw,fill=black,scale=0.5] (b) {};
		\draw (y2.center) .. controls (-4,2.75) and (-4,1.5) .. (d1.center) node[pos=0.1,circle,draw,fill=black,scale=0.5] (c) {}; 
	\end{tikzpicture}
	}
	\hfill
	\scalebox{0.8}[0.8]{
	\begin{tikzpicture}[scale=0.55]
		\node[circle,draw,fill=black,scale=0.5] (y) at (-2,1) {};
		\node[circle,draw,fill=black,scale=0.5] (z) at (2,1) {};
		\node[circle,draw,fill=black,scale=0.5] (x) at (0,-2) {};
		\node[circle,draw,fill=black,scale=0.5] (y2) at (1.25,1.75) {};
		\node[circle,draw,fill=black,scale=0.5] (x2) at (-0.75,-2.75) {};
		\draw (x.center) to node[pos=0.33,circle,draw,fill=black,scale=0.5] (c1) {} 
		node[pos=0.67,circle,draw,fill=black,scale=0.5] (d1) {} (y.center);
		\draw (y.center) to node[pos=0.33,circle,draw,fill=black,scale=0.5] (c2) {} 
		node[pos=0.67,circle,draw,fill=black,scale=0.5] (d2) {} (z.center);
		\draw (z.center) to node[pos=0.33,circle,draw,fill=black,scale=0.5] (c3) {} 
		node[pos=0.67,circle,draw,fill=black,scale=0.5] (d3) {} (x.center);
		\draw (x2.center) to (c1.center);
		\draw (x2.center) .. controls (-4,1) and (-2,3) .. (c2.center);
		\draw (x2.center) .. controls (1.5,-2.75) and (2,-0.25) .. (c3.center);      
		\draw (y2.center) to node[pos=0.5,circle,draw,fill=black,scale=0.5] (a) {} (d2.center);
		\draw (y2.center) .. controls (4,2.5) and (1.75,-1) .. (d3.center) node[pos=0.2,circle,draw,fill=black,scale=0.5] (b) {};
		\draw (y2.center) .. controls (-4,2.75) and (-4,1.5) .. (d1.center) node[pos=0.1,circle,draw,fill=black,scale=0.5] (c) {}; 
		
		\draw[draw,line width=4pt,stdred,opacity=0.8,join=round,dash pattern=on 0pt off 18pt on 1000pt off 0pt] (y2.center) .. controls (4,2.5) and (1.75,-1) .. (d3.center);
		\draw[draw,line width=4pt,stdred,opacity=0.8,join=round] (d3.center) to (x.center) to (y.center) to (c2.center);
		\draw[draw,line width=4pt,stdred,opacity=0.8,join=round] (x2.center) .. controls (-4,1) and (-2,3) .. (c2.center);
		\draw[draw,line width=4pt,stdred,opacity=0.8,join=round] (x2.center) .. controls (1.5,-2.75) and (2,-0.25) .. (c3.center) to (z.center) to (d2.center) to (y2.center);	
		\draw[draw,line width=4pt,stdred,opacity=0.8,join=round,dash pattern=on 23pt off 1000pt] (y2.center) .. controls (-4,2.75) and (-4,1.5) .. (d1.center);
		
		\node[circle,draw,fill=black,scale=0.5] (y) at (-2,1) {};
		\node[circle,draw,fill=black,scale=0.5] (z) at (2,1) {};
		\node[circle,draw,fill=black,scale=0.5] (x) at (0,-2) {};
		\node[circle,draw,fill=black,scale=0.5] (y2) at (1.25,1.75) {};
		\node[circle,draw,fill=black,scale=0.5] (x2) at (-0.75,-2.75) {};
		\draw (x.center) to node[pos=0.33,circle,draw,fill=black,scale=0.5] (c1) {} 
		node[pos=0.67,circle,draw,fill=black,scale=0.5] (d1) {} (y.center);
		\draw (y.center) to node[pos=0.33,circle,draw,fill=black,scale=0.5] (c2) {} 
		node[pos=0.67,circle,draw,fill=black,scale=0.5] (d2) {} (z.center);
		\draw (z.center) to node[pos=0.33,circle,draw,fill=black,scale=0.5] (c3) {} 
		node[pos=0.67,circle,draw,fill=black,scale=0.5] (d3) {} (x.center);
		\draw (x2.center) to (c1.center);
		\draw (x2.center) .. controls (-4,1) and (-2,3) .. (c2.center);
		\draw (x2.center) .. controls (1.5,-2.75) and (2,-0.25) .. (c3.center);      
		\draw (y2.center) to node[pos=0.5,circle,draw,fill=black,scale=0.5] (a) {} (d2.center);
		\draw (y2.center) .. controls (4,2.5) and (1.75,-1) .. (d3.center) node[pos=0.2,circle,draw,fill=black,scale=0.5] (b) {};
		\draw (y2.center) .. controls (-4,2.75) and (-4,1.5) .. (d1.center) node[pos=0.1,circle,draw,fill=black,scale=0.5] (c) {}; 
	\end{tikzpicture}
	}
	\hfill
	\scalebox{0.8}[0.8]{
	\begin{tikzpicture}[scale=0.55]
		\node[circle,draw,fill=black,scale=0.5] (y) at (-2,1) {};
		\node[circle,draw,fill=black,scale=0.5] (z) at (2,1) {};
		\node[circle,draw,fill=black,scale=0.5] (x) at (0,-2) {};
		\node[circle,draw,fill=black,scale=0.5] (y2) at (1.25,1.75) {};
		\node[circle,draw,fill=black,scale=0.5] (x2) at (-0.75,-2.75) {};
		\draw (x.center) to node[pos=0.33,circle,draw,fill=black,scale=0.5] (c1) {} 
		node[pos=0.67,circle,draw,fill=black,scale=0.5] (d1) {} (y.center);
		\draw (y.center) to node[pos=0.33,circle,draw,fill=black,scale=0.5] (c2) {} 
		node[pos=0.67,circle,draw,fill=black,scale=0.5] (d2) {} (z.center);
		\draw (z.center) to node[pos=0.33,circle,draw,fill=black,scale=0.5] (c3) {} 
		node[pos=0.67,circle,draw,fill=black,scale=0.5] (d3) {} (x.center);
		\draw (x2.center) to (c1.center);
		\draw (x2.center) .. controls (-4,1) and (-2,3) .. (c2.center);
		\draw (x2.center) .. controls (1.5,-2.75) and (2,-0.25) .. (c3.center);      
		\draw (y2.center) to node[pos=0.5,circle,draw,fill=black,scale=0.5] (a) {} (d2.center);
		\draw (y2.center) .. controls (4,2.5) and (1.75,-1) .. (d3.center) node[pos=0.2,circle,draw,fill=black,scale=0.5] (b) {};
		\draw (y2.center) .. controls (-4,2.75) and (-4,1.5) .. (d1.center) node[pos=0.1,circle,draw,fill=black,scale=0.5] (c) {};    
		
		\draw[draw,line width=4pt,stdred,opacity=0.8,join=round,dash pattern=on 0pt off 25pt on 1000pt off 0pt] (y2.center) .. controls (-4,2.75) and (-4,1.5) .. (d1.center);
		\draw[draw,line width=4pt,stdred,opacity=0.8,join=round] (d1.center) to (y.center) to (z.center) to (c3.center);
		\draw[draw,line width=4pt,stdred,opacity=0.8,join=round] (x2.center) .. controls (1.5,-2.75) and (2,-0.25) .. (c3.center);
		\draw[draw,line width=4pt,stdred,opacity=0.8,join=round] (x2.center) to (c1.center) to (x.center) to (d3.center);
		\draw[draw,line width=4pt,stdred,opacity=0.8,join=round] (y2.center) .. controls (4,2.5) and (1.75,-1) .. (d3.center);
		\draw[draw,line width=4pt,stdred,opacity=0.8,join=round] (y2.center) to (a.center);
		
		\node[circle,draw,fill=black,scale=0.5] (y) at (-2,1) {};
		\node[circle,draw,fill=black,scale=0.5] (z) at (2,1) {};
		\node[circle,draw,fill=black,scale=0.5] (x) at (0,-2) {};
		\node[circle,draw,fill=black,scale=0.5] (y2) at (1.25,1.75) {};
		\node[circle,draw,fill=black,scale=0.5] (x2) at (-0.75,-2.75) {};
		\draw (x.center) to node[pos=0.33,circle,draw,fill=black,scale=0.5] (c1) {} 
		node[pos=0.67,circle,draw,fill=black,scale=0.5] (d1) {} (y.center);
		\draw (y.center) to node[pos=0.33,circle,draw,fill=black,scale=0.5] (c2) {} 
		node[pos=0.67,circle,draw,fill=black,scale=0.5] (d2) {} (z.center);
		\draw (z.center) to node[pos=0.33,circle,draw,fill=black,scale=0.5] (c3) {} 
		node[pos=0.67,circle,draw,fill=black,scale=0.5] (d3) {} (x.center);
		\draw (x2.center) to (c1.center);
		\draw (x2.center) .. controls (-4,1) and (-2,3) .. (c2.center);
		\draw (x2.center) .. controls (1.5,-2.75) and (2,-0.25) .. (c3.center);      
		\draw (y2.center) to node[pos=0.5,circle,draw,fill=black,scale=0.5] (a) {} (d2.center);
		\draw (y2.center) .. controls (4,2.5) and (1.75,-1) .. (d3.center) node[pos=0.2,circle,draw,fill=black,scale=0.5] (b) {};
		\draw (y2.center) .. controls (-4,2.75) and (-4,1.5) .. (d1.center) node[pos=0.1,circle,draw,fill=black,scale=0.5] (c) {};     
	\end{tikzpicture}  
	}
	\caption{The three Hamiltonian paths between pairs of neighbours of $z_i$ in $G_i^--H_i^-$.
	\label{fig:example-graph-hamiltonian-paths-in-G-H}}
\end{figure}
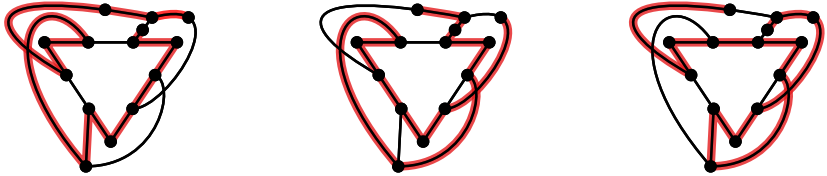
With this out of the way we now construct our desired graph $G=K_4[G_1,G_2,G_3]$ as follows.
Take $G_1^-\cup G_2^-\cup G_3^-\cup G_4$ and add the following six edges: $xa_1$, $xa_2$, $xa_3$, $b_1c_2$, $b_2c_3$, and $b_3c_1$.
This graph is visualised in \Cref{fig:example-graph-k4g1g2g3}.
\begin{figure}[htb]
	\centering
	\begin{tikzpicture}[scale=0.95]
		\node[circle, draw] (g1) at (0,2) {$G_1^-$};
		\node[circle, draw] (g2) at (-1.75,-1) {$G_2^-$};
		\node[circle, draw] (g3) at (1.75,-1) {$G_3^-$};
		\node[circle,draw,fill=black,scale=0.5,label=above right:{$x$}] (x) at (0,0) {};
		
		\draw (g1) to node[pos=0,circle,draw,fill=black,scale=0.5,label=below left:{$a_1$}] {} (x);
		\draw (g2) to node[pos=0,circle,draw,fill=black,scale=0.5,label=right:{$a_2$}] {} (x);
		\draw (g3) to node[pos=0,circle,draw,fill=black,scale=0.5,label=left:{$a_3$}] {} (x);
		\draw (g1.west) to node[pos=0,circle,draw,fill=black,scale=0.5,label=left:{$b_1$}] {} 
		node[pos=1,circle,draw,fill=black,scale=0.5,label=above left:{$c_2$}] {} (g2.north);
		\draw (g2.south east) to node[pos=0,circle,draw,fill=black,scale=0.5,label=below:{$b_2$}] {} 
		node[pos=1,circle,draw,fill=black,scale=0.5,label=below:{$c_3$}] {} (g3.south west);
		\draw (g3.north) to node[pos=0,circle,draw,fill=black,scale=0.5,label=above right:{$b_3$}] {} 
		node[pos=1,circle,draw,fill=black,scale=0.5,label=right:{$c_1$}] {} (g1.east);
	\end{tikzpicture}
	\caption{The graph $G = K_4[G_1,G_2,G_3]$ that serves as our example.
	\label{fig:example-graph-k4g1g2g3}}
\end{figure}
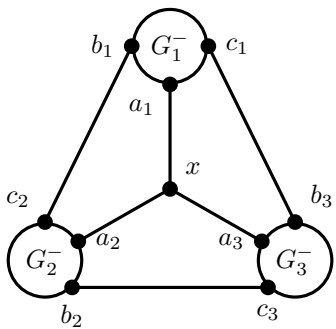

We now prove that this does the trick.
\begin{theorem}
	The graph $G$ is cubic, non-traceable, star-like, and 3-connected.
\end{theorem}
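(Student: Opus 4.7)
The plan is to verify the four properties in turn; each uses a different item from Lemma~\ref{prop-of-gi-minus}.

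For cubicness, a direct degree count suffices: every interior vertex of $G_i^-$ keeps its degree~$3$ from~$G_i$; each attachment vertex $a_i,b_i,c_i$ has degree~$2$ in $G_i^-$ and gains exactly one new edge in $G$ (either $xa_i$ or one of the triangle edges $b_1c_2,b_2c_3,b_3c_1$); and $x$ itself receives the three edges $xa_1,xa_2,xa_3$.

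For non-traceability, suppose $G$ has a Hamiltonian path $P$. There are only three edges between $G_k^-$ and its complement in $G$, namely $xa_k$, $b_kc_{k+1}$ and $c_kb_{k-1}$ (indices mod~$3$). Consequently every maximal subpath of $P$ inside $G_k^-$ has its ends either at an endpoint of $P$ or in $\{a_k,b_k,c_k\}=\NX[G_k]{z_k'}$. If $P$ has no endpoint in $G_k^-$, then $P\cap G_k^-$ is a Hamiltonian path of $G_k^-$ whose ends both lie in $\NX[G_k]{z_k'}$, contradicting Lemma~\ref{prop-of-gi-minus}(iii). Hence each of $G_1^-,G_2^-,G_3^-$ contains at least one endpoint of $P$, giving three endpoints---a contradiction.

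For 3-connectedness, suppose $S=\{u,w\}$ is a 2-vertex cut. For any vertex $v\notin S$ lying in some $G_i^-$, Lemma~\ref{prop-of-gi-minus}(iv) gives three internally disjoint paths from $v$ to $\{a_i,b_i,c_i\}$, at least one of which avoids $S$ and reaches an attachment vertex outside $S$. Contracting each $G_j^-$ to a single node yields $K_4$, which is 3-connected, so the outer network on $\{x\}\cup\bigcup_j\{a_j,b_j,c_j\}$ remains connected after deletion of the at most two of its vertices that lie in~$S$. Combining these observations gives a $v$-$v'$-walk in $G-S$ for any two vertices $v,v'\notin S$, contradicting $S$ being a cut.

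The main work is star-likeness. In each $G_i$ I would choose a perfect matching $M_i$ containing an edge at $z_i'$---say $z_1'a_1\in M_1$, $z_2'b_2\in M_2$ and $z_3'c_3\in M_3$---such that $G_i-M_i$ consists of exactly two cycles: a Hamiltonian cycle $D_i$ of $H_i^-$ (which exists by Lemma~\ref{prop-of-gi-minus}(vi)) and an outer cycle $O_i$ through $z_i'$ covering the remaining (gadget) vertices of~$G_i$. Such $M_i$ can be obtained by extending a Hamiltonian cycle of $H_i^-$ through the gadget attached at $z_i'$, using the Hamiltonian $uv$-paths of $G_i^- - H_i^-$ from Lemma~\ref{prop-of-gi-minus}(v) to route $O_i$; the three edges between $H_i^-$ and the gadget are placed in $M_i$, which guarantees the bridging matching edges from each $D_i$ to the outer part. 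Setting
\begin{displaymath}
M = \bigcup_{i=1}^{3} \bigl(M_i \cap E(G_i^-)\bigr) \cup \{xa_1, b_2c_3\}
\end{displaymath}
yields a perfect matching of~$G$. Tracing $G-M$, the three paths $O_i-z_i'$ in the $G_i^-$'s are stitched together by the external edges $xa_2,xa_3,b_1c_2,b_3c_1$ (those not in~$M$) into a single central cycle $C_1$ passing through~$x$, while the three cycles $D_1,D_2,D_3$ survive as peripheral cycles, each joined to $C_1$ by matching edges of $M_i$ between $H_i^-$ and the gadget and by no matching edges to any other $D_j$. So $G_M$ is a star with three leaves centred at $C_1$, and $G$ is star-like. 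The technical challenge is constructing $M_i$ so that $G_i-M_i$ has exactly these two cycles with the prescribed matching edge at $z_i'$; this is where items~(v) and (vi) of Lemma~\ref{prop-of-gi-minus} do the real work.
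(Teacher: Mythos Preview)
Your arguments for cubicness and non-traceability match the paper's proof (you just spell out the degree count and the pigeonhole on endpoints a bit more explicitly). Your star-like argument is also correct and ultimately produces the same $2$-factor as the paper: three Hamiltonian cycles in the $H_i^-$ and one big cycle through the gadgets and~$x$. The paper simply writes down these four cycles directly, using the paths $P_i(u,v)$ from Lemma~\ref{prop-of-gi-minus}\ref{prop-of-gi-minus-path-around-hi} to assemble the central cycle
\[
C = x\,a_1\,P_1(a_1,b_1)\,b_1\,c_2\,P_2(c_2,b_2)\,b_2\,c_3\,P_3(c_3,a_3)\,a_3\,x,
\]
and never mentions the matchings $M_i$ at all. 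Your detour through perfect matchings $M_i$ of the individual $G_i$ with a prescribed edge at~$z_i'$ works, but it is an unnecessary layer: once you observe that the paths $P_i(\cdot,\cdot)$ together with the $H_i^-$-Hamiltonian cycles form a $2$-factor of~$G$, the complement is automatically a perfect matching because $G$ is cubic.

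The genuine gap is in your $3$-connectedness argument, which differs from the paper's. The paper exhibits three internally vertex-disjoint $uv$-paths explicitly, distinguishing the cases $v\in G_1^-$, $v\in G_2^-$, and $v=x$, and writing the paths down using Lemma~\ref{prop-of-gi-minus}\ref{prop-of-gi-minus-exit-paths} and~\ref{prop-of-gi-minus-path-around-hi}. Your approach via a hypothetical $2$-cut $S$ is a reasonable alternative strategy, but the key sentence is wrong as stated: the ``outer network on $\{x\}\cup\bigcup_j\{a_j,b_j,c_j\}$'' has only the six edges $xa_i$ and $b_ic_{i+1}$ on ten vertices and is \emph{not} connected to begin with, so it certainly cannot ``remain connected after deletion of at most two of its vertices''. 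The $K_4$ you obtain by contracting each $G_j^-$ is $3$-connected, but lifting a path in $K_4$ back to $G-S$ requires routing \emph{through} a $G_j^-$ between two of its attachment vertices while avoiding $S\cap V(G_j^-)$; this is exactly the step you have skipped. It can be repaired (for instance by using that $G_j-\{z_j',s\}$ is connected whenever $|S\cap V(G_j^-)|\le 1$, and treating the case $S\subseteq V(G_j^-)$ separately via Lemma~\ref{prop-of-gi-minus}\ref{prop-of-gi-minus-exit-paths}), but as written the argument does not go through.
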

\begin{proof}
	The graph is cubic by construction. 
	To see that $G$ is not traceable, we simply need to realise that a Hamiltonian path $P$ would yield a graph $G_i^-$ in which no end of the path resides.
	This means that $P$ restricted to $G_i^-$ would necessarily be a Hamiltonian path with ends in \NX[G_i]{z_i'}, which does not exits by Lemma~\ref{prop-of-gi-minus}~\ref{prop-of-gi-minus-Ham-paths}.
	
	In order to prove that $G$ is star-like, we just need to specify the cycles.
	We take a Hamiltonian cycle in each $H_i^-$ for $i\in I$, which exist by Lemma~\ref{prop-of-gi-minus}~\ref{prop-of-gi-minus-Ham}.
	In addition, we use the paths given by Lemma~\ref{prop-of-gi-minus}~\ref{prop-of-gi-minus-path-around-hi} to obtain a final Hamiltonian cycle $C$ in $G-\bigcup_{i\in I}H_i^-$, namely
	\begin{displaymath}
		C = xa_1P_1(a_1,b_1)b_1c_2P_2(c_2,b_2)b_2c_3P_3(c_3,a_3)a_3x.
	\end{displaymath}
	This yields a decomposition where the contraction graph is a star in which the centre corresponds to $C$ and the three tips to the cycles in the $H_i^-$.
	
	Finally, we are only left with the proof that $G$ is 3-connected.
	To this end we prove the existence of three internally vertex-disjoint paths between any pair $u,v$ of distinct vertices in $G$.
	Let $u\in G_1^-$ without loss of generality.
	If $v\in G_1^-$ as well, then $G/(VG\setminus VG_1^-) \cong G_1$ contains three $uv$-paths by the 3-connectivity of $G_1$.
	If one such path uses the super node, we replace it by a path through the subgraph we contracted.
	So assume that $v\notin G_1^-$.
	If $v\in G_2^-$ ($G_3^-$ is analogous), we use Lemma~\ref{prop-of-gi-minus}~\ref{prop-of-gi-minus-exit-paths} to reduce the problem to finding three disjoint $\NX[G_1]{z_1'}\NX[G_2]{z_2'}$-paths.
	But these exist, just take
	\begin{displaymath}
		P_1 = a_1xa_2, \quad P_2 = b_1c_2, \quad P_3 = c_1b_3P_3(b_3,c_3)c_3b_2.
	\end{displaymath}
	Similarly we deal with the case that $v=x$, where it suffices to find three $x\NX{z_1'}$-paths that are disjoint aside from $x$.
	This time we use
	\begin{displaymath}
		P_1 = xa_1, \quad P_2 = xP_2a_2(a_2,c_2)c_2b_1, \quad P_3 = xa_3P_3(a_3,b_3)b_3c_1.
	\end{displaymath}
	This completes the proof.\qed
\end{proof}

\begin{theorem}
	The graph $G$ has genus and non-orientable genus at least~3.
\end{theorem}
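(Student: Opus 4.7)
The plan is to produce three pairwise vertex-disjoint non-planar minors inside $G$ and then invoke the additivity of (both orientable and non-orientable) genus. Concretely, by construction the subgraphs $G_1^-$, $G_2^-$, $G_3^-$ of $G$ are pairwise vertex-disjoint, since the only edges joining distinct $G_i^-$ to the rest of $G$ are the six edges $xa_i$, $b_ic_{i+1}$ (indices mod~3). By Lemma~\ref{prop-of-gi-minus}\ref{prop-of-gi-minus-non-planar} each $G_i^-$ has $H_i$ as a minor, and by assumption each $H_i$ is non-planar (whence $\gamma(H_i)\geq 1$ and $\tilde\gamma(H_i)\geq 1$). Hence the disjoint union $H_1\cup H_2\cup H_3$ is a minor of $G$.

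Next, I would apply classical additivity results. For the orientable case the Battle–Harary–Kodama–Youngs theorem gives
\begin{displaymath}
\gamma(H_1\cup H_2\cup H_3)=\gamma(H_1)+\gamma(H_2)+\gamma(H_3)\geq 3,
\end{displaymath}
and the analogous result of Stahl and Beineke for non-orientable genus over disjoint unions of non-planar connected graphs yields $\tilde\gamma(H_1\cup H_2\cup H_3)\geq 3$. Since both $\gamma$ and $\tilde\gamma$ are monotone under taking minors, this gives $\gamma(G)\geq 3$ and $\tilde\gamma(G)\geq 3$, which is the claim.

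The main obstacle is the non-orientable case: unlike the orientable genus, $\tilde\gamma$ is not always additive in the expected way (there are well-known pathologies when several blocks or components are planar, where one can ``save'' a cross-cap). The escape is precisely that all three of our components $H_i$ are non-planar, so each contributes at least one cross-cap independently; in that regime the Stahl–Beineke additivity applies cleanly. If one wanted to avoid citing this delicate theorem, one could instead argue directly from Euler's formula by noting that three vertex-disjoint non-planar subgraphs force any cellular embedding to have Euler characteristic at most $2-3\cdot 1=-1$, for example by removing an open disc around each embedded non-planar piece and observing that the complementary surface must accommodate the remaining two non-planar subgraphs on a surface of correspondingly smaller Euler genus; iterating gives the bound. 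Either route yields the theorem, and so, in particular, $G$ is not embeddable in the torus, the Klein bottle, or any surface covered by the previously known cases of the 3-decomposition conjecture.
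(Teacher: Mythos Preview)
Your argument is correct and follows the same strategy as the paper: exhibit three pairwise disjoint non-planar minors inside $G$ and invoke additivity of genus, then pass back to $G$ by minor-monotonicity. The only packaging difference is that the paper first replaces each $H_i$ by a Kuratowski minor $H_i'\in\{K_5,K_{3,3}\}$ and then, by contracting the edges $xa_i$, produces a \emph{connected} minor of $G$ whose three blocks are $H_1',H_2',H_3'$; this lets the Battle--Harary--Kodama--Youngs and Stahl--Beineke theorems apply verbatim, since both are stated for blocks of a connected graph rather than for components of a disjoint union. Your disconnected version works as well, but note that Stahl--Beineke is not literally a statement about disjoint unions; the clean justification of $\tilde\gamma(H_1\cup H_2\cup H_3)\ge 3$ is the Euler-genus route you sketch at the end (additivity of Euler genus over components together with $\tilde\gamma\ge\text{Euler genus}$), which is in fact slightly more elementary than the paper's explicit Stahl--Beineke computation.
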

\begin{proof}
	As the $H_i$ are non-planar, they each have a $K_5$- or $K_{3,3}$-minor $H_i'$ for $i\in\{1,2,3\}$.
	By Lemma~\ref{prop-of-gi-minus}~\ref{prop-of-gi-minus-non-planar}, the graph $G_i^-$ contains an $H_i'$-minor.
	As the $G_i^-$ are connected, we may assume that all their nodes are contained in some super node of $H_i'$.
	By taking these minors and removing the edges $b_1c_2,b_2c_3,b_3c_1$ in $G$, we obtain the minor $H_1'\cup H_2'\cup H_3'\cup G_4+E'$ where $E'$ contains the three edges from $x$ to the super nodes containing the vertices $a_1,a_2,a_3$.
	By contracting the edges in $E'$ as well, we obtain a connected graph $H$ with the three blocks $H_1',H_2',H_3'$.
	
	The genus of the $H_i'$ is~1 and so is the non-orientable genus, as they are non-planar but can be embedded in the Torus or the projective plane, respectively.
	Consequently, the genus of $H$ is~3, because it is the sum of the genuses of its blocks by \cite{BHKY62}.
	We also have that $H$ is not orientably simple by \cite[Theorem~1]{BS77} as $K_5$ and $K_{3,3}$ are not.
	Thus the non-orientable genus can be computed as specified in Corollary~3 of the same paper, giving us 
	\begin{displaymath}
		6-\sum_{i=1}^3 \max\{2-2\gamma(H_i'),2-\tilde{\gamma}(H_i')\} = \sum_{i=1}^3 \min\{2,1\} = 3.
	\end{displaymath}
	As a result, $G$ has a minor of genus and non-orientable genus at least~3, proving the claim.\qed
\end{proof}

We have now seen that the star-like graphs constructed this way fulfil none of the requirements necessary to apply one of the previously existing results, except for the tree-width~3 and order~3 contraction graph results.
But as soon as one of the non-planar graphs contains a $K_5$-minor, then the tree-width is at least~4.
So, by assuming that $H_1$ is the Petersen graph for example, we can assure that the graphs constructed here do not have the necessary tree-width.

To see that they do not admit a matching with a contraction graph of order~3, we show that $G$ has no 2-factor consisting of 3 cycles.
Let $F$ by any 2-factor of $G$, then $F$ contains a cycle $C_i$ completely contained in $G_i^-$ for all $i\in\{1,2,3\}$.
This is true as any cycle in $F$ is either completely contained in $G_i^-$, disjoint from it, or consists of a path in it.
But there can be at most one cycle that restricts to a path because $E(\V{G_i^-})$ contains only three edges.
Since this path is not Hamiltonian by Lemma~\ref{prop-of-gi-minus}~\ref{prop-of-gi-minus-Ham-paths}, $G_i^-$ contains at least one cycle of $F$.
Thus $F$ is made up of at least four cycles, one in each $G_i^-$ and one containing $x$.
This completes the proof that our examples fall in none of the classes covered previously.

\bibliographystyle{elsarticle-num}
\bibliography{lit}

\end{document}